\theoremstyle{plain}
\newtheorem{thm}{Theorem}[section]
\newtheorem{lem}[thm]{Lemma}
\theoremstyle{remark}
\newtheorem{rem}{Remark}
\numberwithin{equation}{section}
\DeclareMathOperator{\hdim}{\dim_H}
\newcommand{\dif}{ \, \mathrm d}
\newcommand{\rc}{\mathcal R}
\newcommand{\N}{\mathbb N}
\newcommand{\R}{\mathbb R}
\newcommand{\lm}{\mathcal L}
\newcommand{\br}{\mathbf{r}}
\newcommand{\bx}{\mathbf{x}}
\newcommand{\by}{\mathbf{y}}
\newcommand{\qaq}{\mathrm{\quad and\quad}}
\begin{document}
	\title[Quantitative recurrence properties and strong Borel-Cantelli lemma]{Quantitative recurrence properties and strong dynamical Borel-Cantelli lemma for dynamical systems with exponential decay of correlations}
	\author{Yubin He}
\thanks{This work was supported by NSFC (Nos. 1240010704).}
	\address{Department of Mathematics, Shantou University, Shantou, Guangdong, 515063, China}

	\email{ybhe@stu.edu.cn}

%
%

	\subjclass[2020]{37A05, 37B20}

	\keywords{recurrence, strong dynamical Borel-Cantelli lemma, exponential decay of correlations.}
	\begin{abstract}
		Let $ ([0,1]^d,T,\mu) $ be a measure-preserving dynamical system so that the correlations decay exponentially for H\"older continuous functions. Suppose that $ \mu $ is absolutely continuous with a density function $ h\in L^q(\lm^d) $ for some $ q>1 $, where $ \lm^d $ is the $ d $-dimensional Lebesgue measure. Under mild conditions on the underlying dynamical system, we obtain a strong dynamical Borel-Cantelli lemma for recurrence: For any sequence $ \{R_n\} $ of hyperrectangles with sides parallel to the axes and centered at the origin,
		\[\sum_{n=1}^{\infty}\lm^d(R_n)=\infty\quad\Longrightarrow\quad\lim_{n\to\infty}\frac{\sum_{k=1}^{n}\mathbbm{1}_{R_k+\bx}(T^k\bx)}{\sum_{k=1}^{n}\lm^d(R_k)}=h(\bx)\quad\text{for $ \mu $-a.e.\,$ \bx $},\]
		where $ \bx\in[0,1]^d $ and $ R_k+\bx $ is the translation of $ R_k $. The result applies to Gauss map, $\beta$-transformation and expanding toral endomorphisms.
	\end{abstract}
	\maketitle

\section{Introduction}
Let $ (X,d,T,\mu) $ be a probability measure-preserving system endowed with a compatible metric $ d $ so that $ (X,d) $ is complete and separable. A corollary of the well-known  Poincar\'e's recurrence theorem (see e.g. \cite[Theorem 3.3]{Fu81}) asserts that $ \mu $-almost all points $ x\in X $ are recurrent, i.e.
\[\liminf_{n\to\infty} d(T^nx,x)=0.\]
This is a qualitative statement in nature. The first quantitative result was given by Boshernitzan \cite{Boshernitzan93}.
\begin{thm}[{\cite[Theorem 1.2]{Boshernitzan93}}]
	Let $ (X,d,T,\mu) $ be a probability measure-preserving system endowed with a metric $ d $. Assume that for some $ \alpha>0 $ the $ \alpha $-Hausdorff measure $ \mathcal H^\alpha $ is $ \sigma $-finite on $ (X,d) $. Then for $ \mu $-almost every $ x\in X $, we have
	\[\liminf_{n\to\infty} n^{1/\alpha}d(T^nx,x)<\infty.\]
	Moreover, if $ \mathcal H^\alpha(X)=0 $, then for $ \mu $-almost every $ x\in X $, we have
	\[\liminf_{n\to\infty} n^{1/\alpha}d(T^nx,x)=0.\]
\end{thm}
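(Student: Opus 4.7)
The plan is to bound, for each $c>0$ and $N\ge 1$, the measure of the set
\[
E_{N,c}=\{x\in X:d(T^nx,x)\ge c\,n^{-1/\alpha}\text{ for all }n\ge N\}.
\]
Writing $\psi(x):=\liminf_nn^{1/\alpha}d(T^nx,x)$, one has the inclusions $\{\psi\ge c\}\subseteq\bigcup_{N\ge 1}E_{N,c-\varepsilon}$ for every $\varepsilon>0$, so the second statement amounts to showing $\mu(E_{N,c})=0$ for every $N$ and $c$, while the first amounts to showing $\mu(\bigcup_NE_{N,c})\to 0$ as $c\to\infty$.

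The central pointwise bound is
\[
\mu(E_{N,c}\cap U)\le N\,(\mathrm{diam}\,U/c)^{\alpha}
\]
for any measurable $U\subset X$. I would derive this from Kac's lemma applied to the induced map on $U$: if $x\in E_{N,c}\cap U$ and $\tau_U^{(N)}(x)$ denotes the $N$-th return time of $x$ to $U$, then $\tau_U^{(N)}(x)\ge N$, so the defining inequality of $E_{N,c}$ at $n=\tau_U^{(N)}(x)$, combined with $d(T^{\tau_U^{(N)}(x)}x,x)\le\mathrm{diam}\,U$, forces $\tau_U^{(N)}(x)\ge(c/\mathrm{diam}\,U)^{\alpha}$. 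Since the induced map preserves $\mu|_U$, one has $\int_U\tau_U^{(N)}\,d\mu=N\int_U\tau_U\,d\mu\le N$ by Kac, and Markov's inequality yields the displayed bound. Decomposing $X=\bigcup_kX_k$ with $\mathcal H^\alpha(X_k)<\infty$, covering each $X_k$ by sets $\{U_i\}$ with $\sum_i(\mathrm{diam}\,U_i)^{\alpha}\le\mathcal H^\alpha(X_k)+\eta$, and summing then produces
\[
\mu(E_{N,c}\cap X_k)\le N(\mathcal H^\alpha(X_k)+\eta)/c^{\alpha}.
\]

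Under the hypothesis $\mathcal H^\alpha(X)=0$ the bound above can be made arbitrarily small, so $\mu(E_{N,c})=0$ for every $N,c>0$, giving $\psi\equiv 0$ $\mu$-a.e.\ and settling the second statement at once.

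I expect the first statement to be the main obstacle, because the pointwise bound carries a factor of $N$ and a naive union bound over $N$ diverges. To eliminate this $N$-dependence I would pass to the ergodic decomposition and, on each component, iterate the quantitative recurrence condition along the orbit via Birkhoff's theorem applied to the induced map on $U$, in order to replace the factor $N$ by a quantity controlled by $\mu_\omega(U)$. This should yield an $N$-free estimate of the form $\mu_\omega(\{\psi\ge c\})\lesssim\mathcal H^\alpha(X)/c^{\alpha}$, uniform over ergodic components; integrating over the decomposition and letting $c\to\infty$ then gives $\mu(\{\psi=\infty\})=0$, as required.
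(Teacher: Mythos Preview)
The paper does not prove this theorem; it is quoted verbatim as Theorem~1.2 of \cite{Boshernitzan93} and serves purely as background motivation for the paper's own results on the strong dynamical Borel--Cantelli lemma. There is therefore no proof in the paper to compare your proposal against.

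For what it is worth, your argument for the second assertion (the case $\mathcal H^\alpha(X)=0$) is correct and is essentially Boshernitzan's original one: the Kac--Markov bound $\mu(E_{N,c}\cap U)\le N(\operatorname{diam}U/c)^\alpha$ combined with arbitrarily efficient covers yields $\mu(E_{N,c})=0$ for every $N,c$. Your treatment of the first assertion (the $\sigma$-finite case) is only a sketch, however. Passing to ergodic components is in the right spirit, but the sentence ``replace the factor $N$ by a quantity controlled by $\mu_\omega(U)$'' is not yet an argument, and invoking Birkhoff on the induced map does not by itself kill the $N$. In Boshernitzan's proof the $N$-free estimate on an ergodic component comes from using the \emph{first} return rather than the $N$-th, together with the observation that the set where the $\liminf$ is infinite is $T$-invariant and hence null or conull for each $\mu_\omega$; conullity is then ruled out by a single application of the Kac--Markov bound (with $N=1$) on a piece of finite $\mathcal H^\alpha$-measure. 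You should make that step explicit rather than gesture at Birkhoff.
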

Further improvements were established by Barreira and Saussol \cite{BarreiraSaussol01} who related the recurrence rate to the local pointwise dimension of $\mu$.

Boshernitzan's result can be reformulated as: For $ \mu $-almost every $ x $, there is a constant $ c(x)>0 $ such that
\begin{equation}\label{eq:ref1}
	d(T^nx,x)<c(x)n^{-1/\alpha}\quad\text{ for infinitely many }n\in\N,
\end{equation}
or equivalently
\begin{equation}\label{eq:ref2}
	\sum_{n= 1}^\infty\mathbbm{1}_{B(x,c(x)n^{-1/\alpha})}(T^nx)=\infty,
\end{equation}
where $ \mathbbm{1}_E $ denotes the indicator function of the set $ E $ and $B(x,r)$ denotes the open ball with center $x\in X$ and radius $r>0$.
The first reformulation \eqref{eq:ref1} leads to understanding the $\mu$-measure of the recurrence set
\begin{equation}\label{eq:recurrence set ball}
	\rc(\{r_n\}):=\{x\in X: d(T^nx,x)<r_n\text{ for infinitely many } n\in\N\},
\end{equation}
where $ \{r_n\} $ is a sequence of non-negative real numbers. For some rapidly mixing dynamical systems (see \cite{BakerFarmer21,BK24,CWW19,HeLi24,HLSW22,KKP21,KleinbockZheng22}),  there have been verified that $\mu(\rc(\{r_n\})) $ obeys a zero-one law according as the convergence or divergence of the series $ \sum_{n=1}^{\infty}r_n^{\hdim X} $, where $ \hdim $ stands for the Hausdorff dimension. Such a zero-one law is also referred to as {\em dynamical Borel-Cantelli lemma}.

The second reformulation \eqref{eq:ref2} leads to further quantify the divergence rate of the sequence with respect to $ n $
\[\sum_{k=1 }^n\mathbbm{1}_{B(x,r_k)}(T^kx).\]
It is reasonable to expect that the stronger statement would hold: If the system mixes sufficiently fast, then
\begin{equation}\label{eq:expsbc}
	\sum_{k=1 }^n\mathbbm{1}_{B(x,r_k)}(T^kx)\approx c(x)\sum_{k=1}^nr_k^{\hdim X}\quad\text{for $ \mu $-almost every $ x $},
\end{equation}
where $ c(x) $ is a constant dependent of $ x $. Such results are called {\em strong dynamical Borel-Cantelli lemma}. Apparently, if the sum $ \sum_{n=1}^\infty r_n^{\hdim X} $ diverges and \eqref{eq:expsbc} holds, then one can deduce that $ \mu(\rc(\{r_n\}))=1 $. In \cite{Per23}, under some mild conditions, Persson proved a strong dynamical Borel-Cantelli lemma similar to \eqref{eq:expsbc} for a class
of dynamical systems with exponential decay of correlations on the unit interval. This result was subsequently generalized by Sponheimer \cite{Spo23} to more general setting including Axiom A diffeomorphism. Notably, both results require the sequence $ \{r_n\} $ to satisfy
\begin{equation}\label{eq:radcon}
	\lim_{a\to 1^+}\limsup_{n\to\infty}\frac{r_n}{r_{an}}=1\quad\text{and}\quad r_{n}\ge \frac{(\log n)^{4+\varepsilon}}{n}\text{ for some $ \varepsilon>0 $}.
\end{equation}
Naturally, one would like to prove \eqref{eq:expsbc} without assuming \eqref{eq:radcon}, as \eqref{eq:expsbc} with $ x $ replaced by a fixed center $ y\in X $ has been verified in various rapidly mixing systems, see \cite{ChKl01,Kim07,LLVZ23,Phi67}. To this end, we need to impose stronger  conditions on the underlying dynamical system than those in \cite{Per23,Spo23}.

Let us begin with some notation. Let $ X $ be the unit cube $ [0,1]^d $ endowed with the maximum norm $ |\cdot| $. The closure, boundary, $\varepsilon$-neighbourhood and cardinal number of $ A $ will be denoted by $ \overline A $, $ \partial A $, $ A_\varepsilon $ and $  \#  A $, respectively.  Let $ C^{\theta}([0,1]^d) $ be the space of $ \theta $-H\"older continuous functions endowed with the $ \theta $-H\"older norm
\[\|f\|_{\theta}:=\max_{\bx\in[0,1]^d}|f(\bx)|+\sup\bigg\{\frac{|f(\bx)-f(\by)|}{|\bx-\by|^{\theta}}:\bx,\by\in[0,1]^d\bigg\}.\]

Let $ \{U_i\}_{i\in \mathcal I} $ be a countable family of pairwise disjoint open subsets in $ [0,1]^d $ with $ \bigcup_{i\in\mathcal I}\overline{U_i}=[0,1]^d $. The following conditions on the measure-preserving dynamical system $ ([0,1]^d,T,\mu) $ will play a central role in our work.


\noindent \textit{Condition I}. (Exponential decay of correlations) Given $ 0<\theta\le 1 $. There exist constants $ \tau=\tau(\theta)>0 $ and $ C=C(\theta)>0 $ such that for all $ f\in C^\theta([0,1]^d) $, $ g\in L^1(\mu) $ and all $ n\ge 0 $
\begin{equation}\label{eq:exp dec}
	\bigg|\int f\cdot g\circ T^n\dif \mu-\int f\dif \mu\int g \dif \mu\bigg|\le Ce^{-\tau n}\|f\|_\theta|g|_1,
\end{equation}
where $|g|_1=\int|g|\dif\mu$.

\noindent \textit{Condition II}. The  $ T $-invariant measure $ \mu $ is absolute continuous with respect to the $d$-dimensional Lebesgue measure $ \lm^d $ for which the density function $ h $ belongs to $ L^q(\lm^d) $ for some $ q>1 $.

\noindent \textit{Condition III}. There exists $ L>0 $ such that for any $ i\ge 1 $ and any $ \bx,\by\in \overline{U_i} $,
\begin{equation}\label{eq:Lipconst}
	|T\bx-T\by|\ge L|\bx-\by|.
\end{equation}

\noindent \textit{Condition IV}. There exist constants $ 0<\alpha\le d $, $\varepsilon_0>0$ and $ K_1>0 $ such that
\[\sup_{i\in\mathcal I}\sup_{0<\varepsilon<\varepsilon_0}\frac{\lm^d\big((\partial U_i)_\varepsilon\big)}{\varepsilon^\alpha}<K_1.\]

\noindent \textit{Condition V}. There exist a set $ \mathcal A\subset [0,1]^d $, and constants $ \beta_1,\beta_2>0 $ and $ K_2>0 $ such that for any $ r<1 $,
\[\#\bigg\{i\in \mathcal I:U_i\nsubseteq \bigcup_{x\in \mathcal A} B(x,r)\bigg\}\le K_2r^{-\beta_1}\]
and
\[\lm^d\bigg(\bigcup_{x\in \mathcal A} B(x,r)\bigg)\le K_2r^{\beta_2}.\]

\begin{rem}
	\
	(1) Condition I is satisfied by many dynamical systmes, such as Gauss map \cite{Phi67}, $\beta$-transformation \cite{Phi67} and expanding toral endomorphisms \cite[Theorem 2.3]{Baladi00}.

	(2) In \cite[Theorem 2.6]{ABB22}, Allen, Baker and B\'ar\'any showed that for non-uniform Bernoulli measure on the shift space, the convergence/divergence of the natural volume sum does
	not always determine the measure of a recurrence set. Consequently, to establish the (strong) dynamical Borel-Cantelli lemma, it is essential that $\mu$ be an absolutely continuous measure with respect to $\lm^d$. Additionally, the technical conditions $h\in L^q(\lm^d)$ with $q>1$ is needed to effectively estimate the measure of certain sets.

	(3) Although Condition III does not assume that $T$ is expanding, i.e. $L>1$, to the author's best knowledge, systems satisfying Condition I are usually expanding.

	(4) Condition IV says that the boundaries of elements in the partition $ \{U_i\}_{i\in\mathcal I} $ are sufficiently regular. If $ \mathcal I $ is finite and the boundary of each $ U_i $ is included in a $ C^1 $ piecewise embedded compact submanifold of codimension one, then Condition IV holds with $ \alpha=1 $. This condition arises in a natural reason. What needs to be dealt with is the correlations between sets, while Condition I only gives estimate of the correlations between functions. Therefore, we need to approximate these sets with H\"older continuous functions, which leads to condition IV.

	(5) Condition V says that most elements in $ \{U_i\}_{i\in\mathcal I} $ will concentrate on a set with small Lebesgue measure. If $ \mathcal I $ is finite, then we can take $ \mathcal A=\emptyset $ and Condition V holds with arbitrary positive $ \beta_1 $ and $ \beta_2 $. For the Gauss map, the associated partition $ \{U_i\}_{i\in\mathcal I} $ is infinite. We can take $ \mathcal A=\{0\} $, and Condition V holds with $ \beta_1=\beta_2=1 $.

	(6) It is worth pointing out that although Conditions I--V are seemingly restrictive, there are still many dynamical systems satisfy these conditions, such as Gauss map, $\beta$-transformation and expanding toral endomorphisms.
\end{rem}

For any $ \bx=(x_1,\dots,x_d)\in [0,1]^d $ and $ \br=(r_1,\dots,r_d)\in(\R_{\ge 0})^d $, let
\[R(\bx,\br):=\prod_{i=1}^{d}[x_i-r_i,x_i+r_i]\]
be a hyperrectangle. Motivated by the weighted theory of Diophantine approximation, for a sequence $ \{\br_n\} $ of vectors with $ \br_n=(r_{n,1},\dots,r_{n,d})\in(\R_{\ge 0})^d $, we define
\begin{equation}\label{eq:rectangle}
	\rc(\{\br_n\})=\{\bx\in [0,1]^d:T^n\bx\in R(\bx,\br_n)\text{ for infinitely many $ n\in\N $}\}.
\end{equation}
If all the entries of $ \br_n $ coincide, that is $ r_{n,1}=\cdots=r_{n,d}=r_n $, then $ \rc(\{\br_n\}) $ is nothing but $\rc(\{r_n\}) $ defined in \eqref{eq:recurrence set ball}.
\begin{thm}\label{t:main}
	Suppose that the measure preserving dynamical system $ ([0,1]^d,T,\mu) $ satisfies Conditions I--V. Then,
	\[\sum_{n=1}^{\infty} r_{n,1}\cdots r_{n,d}<\infty\quad\Longrightarrow\quad\mu(\rc(\{\br_n\}))=0.\]
	Moreover, if $\lim_{n\to\infty}|\br_n|=0$ and $ \sum_{n=1}^{\infty} r_{n,1}\cdots r_{n,d}=\infty $, then for $ \mu $-almost every $ \bx $,
	\begin{equation}\label{eq:sbc}
		\lim_{n\to\infty}\frac{\sum_{k=1}^n \mathbbm 1_{R(\bx,\br_k)}(T^k\bx)}{\sum_{k=1}^n2^dr_{k,1}\cdots r_{k,d}}=h(\bx),
	\end{equation}
	where $h$ is the density function given in Condition II.
\end{thm}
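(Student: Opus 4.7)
The plan is to handle the two directions of the dichotomy separately; the divergence direction is substantially harder. For the convergence direction, set $A_n=\{\bx:T^n\bx\in R(\bx,\br_n)\}$ and, for each $n$, cover $[0,1]^d$ by $\sim\prod_i r_{n,i}^{-1}$ rectangles $Q_{n,j}$ of dimensions matching $\br_n$ and centered at $\by_{n,j}$. If $\bx\in Q_{n,j}$ and $T^n\bx\in R(\bx,\br_n)$, then $T^n\bx\in R(\by_{n,j},2\br_n)$, reducing matters to bounding $\sum_j\mu(Q_{n,j}\cap T^{-n}R(\by_{n,j},2\br_n))$. Approximating both indicators by H\"older functions at a suitable scale and applying the exponential decay of correlations from Condition I, together with the boundary regularity (Condition IV) and the $L^q$-bound on $h$ (Condition II), yields $\mu(A_n)\lesssim r_{n,1}\cdots r_{n,d}$; the classical Borel--Cantelli lemma then gives $\mu(\rc(\{\br_n\}))=0$.

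For the divergence direction, write $\phi_k(\bx)=\mathbbm{1}_{R(\bx,\br_k)}(T^k\bx)$, $S_n=\sum_{k\le n}\phi_k$, and $\Psi_n=\sum_{k\le n}2^d r_{k,1}\cdots r_{k,d}$. The plan is to establish the variance estimate
\[
\int_{[0,1]^d}\bigl(S_n(\bx)-h(\bx)\Psi_n\bigr)^2\dif\mu(\bx)=O(\Psi_n),
\]
together with its analogue on each sub-interval $(M,N]$, and then invoke a Gal--Koksma type lemma (applied along a geometric subsequence and combined with the monotonicity of $S_n$ and $\Psi_n$) to conclude $S_n/\Psi_n\to h$ $\mu$-almost everywhere. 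Expanding the square and using $\dif\mu=h\dif\lm^d$ reduces everything to asymptotic evaluations of the three moments $\int\phi_k\dif\mu$, $\int h\phi_k\dif\mu$, and $\int\phi_k\phi_m\dif\mu$ for $k<m$; their respective leading orders $2^d r_{k,1}\cdots r_{k,d}\int h^2\dif\lm^d$, $2^d r_{k,1}\cdots r_{k,d}\int h^3\dif\lm^d$, and $(2^d r_{k,1}\cdots r_{k,d})(2^d r_{m,1}\cdots r_{m,d})\int h^3\dif\lm^d$ are arranged so that the leading $\Psi_n^2$ contributions cancel exactly, leaving only an $O(\Psi_n)$ diagonal remainder.

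The common device for each moment is a partition--approximation scheme: for each $k$, partition $[0,1]^d$ into cubes $Q_{k,j}$ of side length $\delta_k$; on $Q_{k,j}$ replace the moving target $R(\bx,\br_k)$ by the fixed target $R(\by_{k,j},\br_k)$ (the symmetric-difference error is handled via a $\delta_k$-thickening of $\partial R(\by_{k,j},\br_k)$ combined with the $L^q$-bound on $h$); smooth the resulting indicators to H\"older functions at scale $\eta_k$; and then apply Condition I. The mixed moment $\int\phi_k\phi_m\dif\mu$ is obtained by applying the decay of correlations iteratively, exploiting the exponential factor $e^{-\tau(m-k)}$ from the time gap. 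Condition III ensures that relevant sets pull back under $T$ in a controlled way, Condition IV controls the H\"older norms of indicator smoothings and boundary corrections, and Condition V is invoked when the partition $\{U_i\}$ of $T$ is infinite, by isolating the finitely many elements not concentrated near the exceptional set $\ca$.

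The main obstacle is precisely the moving target: $\mathbbm{1}_{R(\bx,\br_k)}(T^k\bx)$ is not of the product form $f(\bx)g(T^k\bx)$ to which Condition I directly applies. The partition scheme above sidesteps this difficulty, but three error scales must be balanced---the cube side $\delta_k$, the H\"older smoothing scale $\eta_k$ (whose $\theta$-H\"older norm scales like $\eta_k^{-\theta}$), and the dynamical factor $e^{-\tau k}$---so that the cumulative error summed over the $\sim\delta_k^{-d}$ cubes is $o(r_{k,1}\cdots r_{k,d})$. Thanks to the exponential decay in Condition I, polynomial-in-$k$ choices of $\delta_k$ and $\eta_k$ will suffice, but the bookkeeping is most delicate when the coordinates of $\br_k$ are highly anisotropic, forcing a partition geometry that respects the shape of $R(\bx,\br_k)$ rather than a uniform cube partition.
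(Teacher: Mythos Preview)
Your convergence plan is fine and coincides with the paper's, which simply invokes the same localize-to-boxes argument from \cite{HeLi24}.

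For the divergence part there is a genuine gap. Your variance scheme hinges on the expansions $\int h\phi_k\dif\mu\sim 2^d\gamma_k\int h^3\dif\lm^d$ and $\int\phi_k\phi_m\dif\mu\sim 2^{2d}\gamma_k\gamma_m\int h^3\dif\lm^d$, and already $\int(S_n-h\Psi_n)^2\dif\mu$ contains the term $\Psi_n^2\int h^3\dif\lm^d$. Condition II only gives $h\in L^q(\lm^d)$ for \emph{some} $q>1$, so $\int h^2$ and $\int h^3$ may be infinite; the moment expansion then makes no sense and the $\Psi_n^2$-cancellation cannot even be formulated. The paper sidesteps this via a device from \cite{KKP21} that you are missing: rather than work with $R(\bx,\br_k)$ directly, one rescales each target to $R(\bx,\xi_k(\bx))$ with $\xi_k(\bx)=l_k(\bx)\br_k$ chosen so that $\mu\bigl(R(\bx,\xi_k(\bx))\bigr)=\gamma_k$ is \emph{independent of $\bx$}. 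The strong Borel--Cantelli is proved first for the auxiliary events $\hat E_k=\{T^k\bx\in R(\bx,\xi_k(\bx))\}$ (no higher powers of $h$ appear), and the conclusion for the original rectangles is then recovered by a density argument over rational dilations combined with the Zygmund differentiation theorem---this last step is exactly where $|\br_n|\to0$ and $h\in L^q$, $q>1$, enter.

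A second gap concerns the mixed term. Iterating decay of correlations at times $k$ and $m-k$ after localizing to cubes of side $\delta_m$ incurs an error of order $\delta_m^{-d}e^{-\tau k}$; when $k$ is small (say $k<\sqrt m$) this is not absorbed by any polynomial choice of $\delta_m$. The paper treats this regime separately (Lemmas \ref{l:JcRcR}, \ref{l:measure of intersection}, \ref{l:estimations on correlations 2}): for $k<\sqrt m$ one partitions according to cylinder sets $J_k\in\mathcal F_k$, uses Conditions III--V to bound both the number of relevant cylinders and $\mu\bigl((\partial(J_k\cap R_1\cap T^{-k}R_2))_\varepsilon\bigr)$, and only then applies decay of correlations at time $m$. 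Your proposal mentions Conditions III--V but does not indicate where this two-regime split occurs or how the cylinder estimate replaces the failing mixing bound.
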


\begin{rem}
	The restriction of $X$ to $[0,1]^d$ is not necessary. In fact, if $X$ is a compact subset of $\R^d$ with smooth boundaries, and $(X,T,\mu)$ satisfies Conditions I--V, then we can define a new map $\tilde T:[0,1]^d\to[0,1]^d$ as follows: For any $\bx\in[0,1]^d$,
	\[\tilde T(\bx)=\begin{cases}
		\bx&\text{if $D\bx\notin X$},\\
		T(D\bx)/D&\text{if $D\bx\in X$},
	\end{cases}\]
	where $D=\max\{|\by|:\by\in X\}$. Theorem \ref{t:main} then applies to $([0,1]^d,\tilde T,\tilde T_*\mu)$ and hence to $(X,T,\mu)$, where $\tilde T_*\mu$ is the pushforward of $\mu$.
\end{rem}
\begin{rem}
	In \cite{HeLi24}, the author and Liao proved that under conditions similar to Conditions I--V, the $ \mu $-measure of $ \rc(\{\br_n\}) $ is either zero or one according as the series $ \sum_{n=1}^{\infty}r_{n,1}\cdots r_{n,d} $ converges or not. However, the results in \cite{HeLi24} are not applicable to the Gauss map, as the authors required the partition $ \{U_i\}_{i\in\mathcal I} $ to be finite. Such assumption is not assumed in the present work, and the conclusion is stronger than theirs. Our methods also work under the setting of  \cite{HeLi24}, whence the strong dynamical Borel-Cantelli lemma applies to a broad class of expanding matrix transformations, not limited to expanding toral endomorphisms.
\end{rem}
\begin{rem}
	The proof of Theorem \ref{t:main} partially make use of the geometric properties of Euclidean space that do not necessarily hold for other metric spaces. For example, any hypercube can be partitioned into smaller hypercubes of equal length, and the measure of arbitrary annuli $B(\bx,r+\varepsilon)\setminus B(\bx,r)$ is bounded by $\varepsilon$ up to a multiplicative constant, and so on. The same properties are also implicitly used in \cite{HeLi24}. Our methods are applicable when the ambient metric space meets similar geometric properties. However, the only non-trivial examples we have known are self-conformal sets on $\R$ that satisfy the open set condition. In these cases, the proof of the corresponding strong dynamical Borel-Cantelli lemma is considerably simpler and more straightforward. To maintain readability, we do not pursue such a generalization.
\end{rem}

\section{Correlations of sets with regular boundary}
In this section, we give correlation estimates for sets with regular boundaries. We begin with a simple estimation on measurable subsets which will be frequently referenced in the subsequent discussion.

Throughout, for positive-valued functions $f$ and $g$, we write $f\ll g$ if there exists a constant $c>0$ such that $f(x)\le cg(x)$ for all $x$. We write $f\asymp g$ if $f\ll g\ll f$.
\begin{lem}\label{l:hqs}
	Let $ s=1-1/q $. For any measurable set $ F $, we have
	\[\mu(F)\le |h|_q\lm^d(F)^s,\]
	where $ |h|_q=(\int |h|^q\dif\lm^d)^{1/q} $ is the $ L^q $-norm of $ h $.
\end{lem}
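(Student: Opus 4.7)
The plan is to invoke absolute continuity of $\mu$ with respect to $\lm^d$ (Condition II) together with H\"older's inequality, with no other ingredients needed.

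First I would write $\mu(F) = \int_F h \dif \lm^d = \int \mathbbm{1}_F \cdot h \dif \lm^d$, which is legitimate because $h \in L^q(\lm^d) \subset L^1(\lm^d)$ for any measurable $F \subset [0,1]^d$. Next, I would apply H\"older's inequality with conjugate exponents $q$ and $p = q/(q-1)$ (so that $1/p + 1/q = 1$) to the pair $\mathbbm{1}_F$ and $h$:
\[
\int \mathbbm{1}_F \cdot h \dif \lm^d \le \left(\int \mathbbm{1}_F^{p} \dif \lm^d\right)^{1/p} \left(\int h^{q} \dif \lm^d\right)^{1/q} = \lm^d(F)^{1/p} \, |h|_q.
\]
Since $1/p = 1 - 1/q = s$, this gives exactly $\mu(F) \le |h|_q \lm^d(F)^s$, which is the desired inequality.

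There is essentially no obstacle: the only point worth double-checking is that we may assume without loss of generality $h \ge 0$ (so $|h|^q = h^q$), which is automatic because $h$ is a probability density. Everything else is a single application of H\"older, so the proof should take only a few lines.
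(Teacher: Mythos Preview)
Your proposal is correct and matches the paper's own proof essentially verbatim: the paper also writes $\mu(F)=\int \mathbbm{1}_F\cdot h\dif\lm^d$ and applies H\"older's inequality directly. There is nothing to add.
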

\begin{proof}
	By H\"older inequality,
	\[\mu(F)=\int \mathbbm{1}_F\cdot h\dif\lm^d\le|h|_q\lm^d(F)^s.\qedhere\]
\end{proof}
The following lemma enables us to estimate correlations for sets beyond H\"older continuous functions.
\begin{lem}\label{l:EcapF}
	Let $ E $ and $ F $ be two measurable sets. Then for $ 0<\varepsilon<1 $,
	\[\mu(E\cap T^{-n}F)\le \mu(F)\big(\mu(E)+|h|_q\lm\big((\partial E)_\varepsilon\big)^s+2Ce^{-\tau n}/\varepsilon\big)\]
	and
	\[\mu(E\cap T^{-n}F)\ge \mu(F)\big(\mu(E)-|h|_q\lm^d\big((\partial E)_\varepsilon\big)^s-Ce^{-\tau n}/\varepsilon\big).\]
\end{lem}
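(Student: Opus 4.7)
The plan is to sandwich $\mathbbm{1}_E$ between two Lipschitz approximants and apply Condition~I to each, using Lemma~\ref{l:hqs} to convert the error coming from the $\varepsilon$-collar of $\partial E$ from $\mu$-measure to $\lm^d$-measure. Concretely, I set
\[f_\varepsilon^+(\bx):=\max\{0,1-\dist(\bx,E)/\varepsilon\},\qquad f_\varepsilon^-(\bx):=\min\{1,\dist(\bx,E^c)/\varepsilon\}.\]
Then $f_\varepsilon^-\le\mathbbm{1}_E\le f_\varepsilon^+$, both functions take values in $[0,1]$ and are $1/\varepsilon$-Lipschitz, and the nonnegative differences $f_\varepsilon^+-\mathbbm{1}_E$ and $\mathbbm{1}_E-f_\varepsilon^-$ are supported in $(\partial E)_\varepsilon$ (any point where either difference is nonzero lies within distance $\varepsilon$ of $\partial E$, since the straight segment from it to the closest point in $E$ or $E^c$ must cross $\partial E$). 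Because $[0,1]^d$ has diameter $1$ in the maximum norm, every $1/\varepsilon$-Lipschitz function is $\theta$-H\"older with $\|f_\varepsilon^\pm\|_\theta\le 1+1/\varepsilon\le 2/\varepsilon$, so $f_\varepsilon^\pm$ are admissible inputs to Condition~I.

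Next, Lemma~\ref{l:hqs} applied to $(\partial E)_\varepsilon$ yields
\[\int|\mathbbm{1}_E-f_\varepsilon^\pm|\dif\mu\le\mu((\partial E)_\varepsilon)\le|h|_q\,\lm^d((\partial E)_\varepsilon)^s,\]
which controls how far $\int f_\varepsilon^\pm\dif\mu$ can be from $\mu(E)$. For the upper bound of the lemma, I use $\mathbbm{1}_E\le f_\varepsilon^+$ to write $\mu(E\cap T^{-n}F)\le\int f_\varepsilon^+\cdot\mathbbm{1}_F\circ T^n\dif\mu$, then invoke Condition~I with $f=f_\varepsilon^+$ and $g=\mathbbm{1}_F$ (noting $|g|_1=\mu(F)$) and combine the decorrelation error $Ce^{-\tau n}\|f_\varepsilon^+\|_\theta\mu(F)\le 2Ce^{-\tau n}\mu(F)/\varepsilon$ with the displayed $L^1$ bound on $\int f_\varepsilon^+\dif\mu-\mu(E)$; factoring $\mu(F)$ out of the sum gives the stated inequality. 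The lower bound follows identically from $\mathbbm{1}_E\ge f_\varepsilon^-$ with all signs reversed.

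I do not expect any genuine obstacle here: the lemma is exactly the routine transfer, via mollification of $\mathbbm{1}_E$, of the functional decorrelation of Condition~I into a set-theoretic estimate, with Lemma~\ref{l:hqs} bridging $\mu$- and $\lm^d$-measure on the boundary collar. The only point to watch is that the H\"older norms of the approximants grow like $1/\varepsilon$ rather than $1/\varepsilon^\theta$, which is what produces the $e^{-\tau n}/\varepsilon$ factor in the conclusion.
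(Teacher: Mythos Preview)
Your proposal is correct and matches the paper's argument essentially verbatim: the paper uses exactly your $f_\varepsilon^+$ for the upper bound, applies Condition~I with $g=\mathbbm 1_F$, bounds $\|f_\varepsilon^+\|_\theta\le 1+1/\varepsilon\le 2/\varepsilon$, and controls $\int f_\varepsilon^+\dif\mu-\mu(E)$ by $\mu((\partial E)_\varepsilon)$ via Lemma~\ref{l:hqs}. The only cosmetic difference is the lower approximant---the paper takes $g_\varepsilon(\bx)=\max\{0,1-\dist(\bx,E_{-\varepsilon})/\varepsilon\}$ with $E_{-\varepsilon}=E\setminus(\partial E)_\varepsilon$ instead of your $f_\varepsilon^-=\min\{1,\dist(\bx,E^c)/\varepsilon\}$---but both satisfy $\mathbbm 1_{E_{-\varepsilon}}\le\cdot\le\mathbbm 1_E$ and are $1/\varepsilon$-Lipschitz, so the ensuing estimates are identical.
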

\begin{proof}
	Let $ 0<\varepsilon<1 $. Define
	\[f_\varepsilon(x)=\begin{dcases}
		1-\frac{d(x,E)}{\varepsilon}&\text{if $ d(x,E)<\varepsilon $},\\
		0&\text{if $ d(x,E)\ge \varepsilon $}.
	\end{dcases}\]
	It is not difficult to verify that $ f_\varepsilon $ is Lipschitz with Lipschitz constant $ 1/\varepsilon $. Hence, $ f_\varepsilon\in C^\theta([0,1]^d) $. Applying the exponential decay of correlations (see Condition I) to $ f=f_\varepsilon $ and $ g=\mathbbm{1}_F $, we have
	\[\begin{split}
		\int f_\varepsilon\cdot\mathbbm{1}_F\circ T^n\dif\mu&\le \int\mathbbm{1}_F\dif\mu\cdot \bigg(\int f_\varepsilon\dif\mu+Ce^{-\tau n}\|f_\varepsilon\|_\theta\bigg)\\
		&\le \int\mathbbm{1}_F\dif\mu \cdot\bigg(\int f_\varepsilon\dif\mu+Ce^{-\tau n}(1+1/\varepsilon)\bigg).
	\end{split}\]
	Note that $ f_\varepsilon(x)=1 $ for $ x\in E $ and the support of $ f_\varepsilon $ is contained in $ E_\varepsilon= E\cup (\partial E)_\varepsilon $. Thus, we have
	\[\int f_\varepsilon\dif\mu\le \mu(E_\varepsilon)\le \mu(E)+\mu\big((\partial E)_\varepsilon\big),\]
	and so
	\[\begin{split}
		\mu(E\cap T^{-n}F)&\le \int f_\varepsilon\cdot\mathbbm{1}_F\circ T^n\dif\mu\le \mu(F)\big(\mu(E)+\mu\big((\partial E)_\varepsilon\big)+Ce^{-\tau n}(1+1/\varepsilon)\big)\\
		&\le \mu(F)\big(\mu(E)+\mu\big((\partial E)_\varepsilon\big)+2Ce^{-\tau n}/\varepsilon\big).
	\end{split}\]
Apply Lemma \ref{l:hqs}, the first inequality follows.

To prove the other inequality, by letting $ E_{-\varepsilon}=E\setminus (\partial E)_\varepsilon $, we see that
\[x\in E_{-\varepsilon}\quad\Longrightarrow\quad B(x,\varepsilon)\subset E.\]
Define
\[g_\varepsilon(x)=\begin{dcases}
	1-\frac{d(x,E_{-\varepsilon})}{\varepsilon}&\text{if $ d(x,E_{-\varepsilon})<\varepsilon $},\\
	0&\text{if $ d(x,E_{-\varepsilon})\ge \varepsilon $}.
\end{dcases}\]
Clearly, by definition
\[\mathbbm{1}_{E_{-\varepsilon}}(x)\le g_\varepsilon(x)\le \mathbbm{1}_E(x).\]
Now employing the argument similar to above, we arrive at the second inequality of the lemma.
\end{proof}
As a consequence, we get the exponential decay of correlations for hyperrectangles.
\begin{lem}\label{l:RcapF}
	Let $ R $ be a hyperrectangle and let $ F $ be a measurable set. Then, we have
	\[|\mu(R\cap T^{-n}F)-\mu(R)\mu(F)|\le c_1\mu(F)e^{-\tau_1n},\]
	where $ c_1=4d|h|_q+2C $ and $ \tau_1=s\tau/(1+s) $.
\end{lem}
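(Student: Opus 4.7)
The plan is to apply Lemma~\ref{l:EcapF} with $E=R$, then choose the free parameter $\varepsilon$ optimally, after first establishing the key geometric estimate on $\lm^d((\partial R)_\varepsilon)$ for hyperrectangles.

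The boundary $\partial R$ of any hyperrectangle $R\subset[0,1]^d$ is a union of $2d$ faces, each of which is a $(d-1)$-dimensional rectangle contained in (a translate of) $[0,1]^{d-1}$. The $\varepsilon$-neighbourhood of each such face, restricted to $[0,1]^d$, has $d$-dimensional Lebesgue measure at most $2\varepsilon$, so
\[\lm^d((\partial R)_\varepsilon\cap[0,1]^d)\le 4d\,\varepsilon\qquad(\varepsilon\in(0,1)).\]
I may restrict to $[0,1]^d$ because $\mu$ is supported there and Lemma~\ref{l:hqs} was applied to the $\mu$-measure of $(\partial E)_\varepsilon$ in the proof of Lemma~\ref{l:EcapF}. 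Using $s\le 1$ and $(4d)^s\le 4d$ then gives $|h|_q\lm^d((\partial R)_\varepsilon)^s\le 4d|h|_q\,\varepsilon^s$, and so Lemma~\ref{l:EcapF} yields
\[\bigl|\mu(R\cap T^{-n}F)-\mu(R)\mu(F)\bigr|\le \mu(F)\bigl(4d|h|_q\,\varepsilon^s+2Ce^{-\tau n}/\varepsilon\bigr),\]
with the weaker constant $2C$ being used to dominate the $C$ coming from the lower bound.

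The last step is to balance the two terms. I would take $\varepsilon=e^{-\tau n/(1+s)}$, for which a direct computation gives $\varepsilon^s=e^{-\tau n}/\varepsilon=e^{-\tau_1 n}$ with $\tau_1=s\tau/(1+s)$. Substituting gives the claimed inequality with $c_1=4d|h|_q+2C$. (For the small range of $n$ where this choice of $\varepsilon$ exceeds $1$, the bound is trivial since $\mu(R\cap T^{-n}F)\le \mu(F)$.) The main work is the boundary volume estimate, which is a straightforward geometric count; once it is in hand, the rest of the argument is simply the standard optimization of a two-term bound, so I do not foresee any real obstacle.
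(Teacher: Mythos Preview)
Your proposal is correct and follows essentially the same approach as the paper: both use the geometric fact that $(\partial R)_\varepsilon$ is covered by $2d$ hyperrectangles each of volume at most $2\varepsilon$, then plug $\varepsilon=e^{-\tau n/(1+s)}$ into Lemma~\ref{l:EcapF}. Your explicit mention of $(4d)^s\le 4d$ and the edge-case remark are harmless elaborations on the same argument.
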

\begin{proof}
	Note that for any $ \varepsilon>0 $, $ (\partial R)_\varepsilon $ is contained in $ 2d $ hyperrectangles, each of which has volume less than $ 2\varepsilon $. Applying Lemma \ref{l:EcapF} with $ \varepsilon=e^{-\tau n/(1+s)} $, we have
	\[\mu(R\cap T^{-n}F)\le \mu(F)(\mu(R)+4d|h|_qe^{-s\tau n/(1+s)}+2Ce^{-s\tau n/(1+s)})\]
	and
	\[\mu(R\cap T^{-n}F)\ge \mu(F)(\mu(R)-4d|h|_qe^{-s\tau n/(1+s)}-Ce^{-s\tau n/(1+s)}),\]
	which finishes the proof.
\end{proof}
The proof of divergence part of the theorem crucially relies on the following two technical lemmas. The ideas originate from \cite[Lemmas 2.8 and 2.10]{HeLi24}, but the proofs are different because, in the current setting, $ T $ is not necessarily expanding and the partition $ \{U_i\}_{i\in \mathcal I} $ may be infinite. For any $n\ge 1$, define
\[	\mathcal F_n:=\{U_{i_0}\cap\cdots\cap T^{-(n-1)}U_{i_{n-1}}:i_0,\dots,i_{n-1}\in\mathcal I\}.\]
\begin{lem}\label{l:JcRcR}
	There exists a constant $ c_2 $ such that for any $0<\varepsilon<\varepsilon_0$ with $\varepsilon_0$ given in Condition IV and any hyperrectangles $ R_1,R_2\subset[0,1]^d $,
	\[\mu\big((\partial(J_n\cap R_1\cap T^{-n}R_2))_\varepsilon\big)\le c_2(\max(n, L^{-nd})\varepsilon^{\min(1,\alpha)})^{s}.\]
\end{lem}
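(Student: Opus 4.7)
My plan is to bound $\lm^d\big((\partial(J_n\cap R_1\cap T^{-n}R_2))_\varepsilon\big)$ first, then convert the bound to a $\mu$-measure bound via Lemma~\ref{l:hqs}, which is precisely what introduces the exponent $s$ and a factor of $|h|_q$. Using the elementary inclusion
\[
\partial(J_n\cap R_1\cap T^{-n}R_2)\subset \partial J_n\cup\partial R_1\cup\partial(T^{-n}R_2),
\]
the $\varepsilon$-neighbourhood splits into three pieces, which I estimate separately.

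For $(\partial R_1)_\varepsilon$ the estimate is elementary: $\partial R_1$ is a union of $2d$ coordinate faces and its $\varepsilon$-neighbourhood has Lebesgue measure $\ll\varepsilon$. For the other two pieces, the key observation is that on each ancestor $J_j\supset J_n$ (with $J_j\in \mathcal F_j$), Condition III makes $T^j$ injective with a Lipschitz inverse $\phi_j:=(T^j|_{J_j})^{-1}$ of constant $L^{-j}$. Because Condition III gives only one-sided expansion, I cannot simply pull an $\varepsilon$-neighbourhood back through $\phi_j$; instead I use a covering argument. Condition IV yields $\lm^d\big((\partial U_{i_j})_{L^j\varepsilon}\big)\le K_1(L^j\varepsilon)^\alpha$, from which a standard maximal-separation argument covers $\partial U_{i_j}$ by $\ll(L^j\varepsilon)^{\alpha-d}$ balls of radius $L^j\varepsilon$. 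Pushing this cover through the $L^{-j}$-Lipschitz map $\phi_j$ produces a cover of $T^{-j}(\partial U_{i_j})\cap\overline{J_n}$ by sets of diameter $\le\varepsilon$, and blowing up by $\varepsilon$ gives
\[
\lm^d\big((T^{-j}(\partial U_{i_j})\cap\overline{J_n})_\varepsilon\big)\ll L^{-j(d-\alpha)}\varepsilon^\alpha.
\]
Since $\partial J_n\subset\bigcup_{j=0}^{n-1}T^{-j}(\partial U_{i_j})\cap\overline{J_n}$, summing over $j$ controls $(\partial J_n)_\varepsilon$. The same scheme applied to $\partial R_2$ (for which the analogous exponent is $\alpha=1$ with a uniform constant coming from the $2d$ faces) controls $(\partial(T^{-n}R_2)\cap\overline{J_n})_\varepsilon$ by $\ll L^{-n(d-1)}\varepsilon$.

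Combining the three pieces with a case split on whether $L\ge1$ or $L<1$ and whether $\alpha\ge1$ or $\alpha<1$ absorbs everything into $\ll\max(n,L^{-nd})\varepsilon^{\min(1,\alpha)}$. When $L\ge1$ each summand is bounded by a constant and the $n$ factor comes from summing $j=0,\dots,n-1$; when $L<1$ the geometric series $\sum_j L^{-j(d-\alpha)}$ is dominated by $L^{-n(d-\alpha)}\le L^{-nd}$, which also absorbs the $L^{-n(d-1)}$ from the $T^{-n}R_2$ piece. Using $\varepsilon<1$ to replace $\varepsilon^\alpha$ or $\varepsilon$ by $\varepsilon^{\min(1,\alpha)}$ where necessary, everything fits into the claimed form. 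Finally, Lemma~\ref{l:hqs} converts the Lebesgue bound into the $\mu$-bound with exponent $s$, collecting all constants into $c_2$.

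The main obstacle is precisely the covering step above: because Condition III only gives one-sided expansion, one cannot write $\big(T^{-j}(\partial U_{i_j})\big)_\varepsilon$ as a preimage of $(\partial U_{i_j})_{L^j\varepsilon}$, so the cover-and-blowup trick through $\phi_j$ is essential. A minor technicality is that Condition IV only applies for scale $\delta<\varepsilon_0$; when $L>1$ is large and $L^j\varepsilon$ exceeds $\varepsilon_0$ the cover count must be replaced by the trivial volume bound $\ll(L^j\varepsilon)^{-d}$, but this only strengthens the estimate and does not affect the final form.
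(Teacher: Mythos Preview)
Your proposal is correct and follows essentially the same route as the paper: decompose $\partial(J_n\cap R_1\cap T^{-n}R_2)$ into the pieces $\partial R_1$, the contributions $\phi_j(\partial U_{i_j})$ building up $\partial J_n$, and $\phi_n(\partial R_2)$, then control each via a net/covering argument using the $L^{-j}$-Lipschitz inverse $\phi_j$ and Condition~IV, and finally apply Lemma~\ref{l:hqs}. The only cosmetic difference is that the paper takes an $\varepsilon$-net of $\partial U_{i_j}$ and lets $\phi_j$ stretch the spacing to $L^{-j}\varepsilon$, whereas you take an $L^{j}\varepsilon$-net and let $\phi_j$ contract it to scale $\varepsilon$; this dual choice yields the slightly sharper intermediate bound $L^{-j(d-\alpha)}\varepsilon^\alpha$ in place of the paper's $L^{-jd}\varepsilon^\alpha$, but both are absorbed by $\max(n,L^{-nd})\varepsilon^{\min(1,\alpha)}$.
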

\begin{proof}
	Let $0<\varepsilon<\varepsilon_0$. First, we claim that for any $ J_n\in\mathcal F_n $,
	\begin{equation}\label{eq:boundary of cylinder}
		\lm^d\big((\partial J_n)_\varepsilon\big)\ll \begin{cases}
			\dfrac{L^{-nd}-1}{L^{-d}-1}\varepsilon^\alpha&\text{if $L<1$},\\
			n\epsilon^\alpha&\text{if $L\ge 1$},
		\end{cases}
	\end{equation}
	where $0<\alpha\le d$ is given in Condition IV.
	We proceed by induction. For $ n=1 $, we have $\partial J_1=\partial U_i$ for some $i\in\mathcal I$, and so this is implied by Condition IV.

	Assume that \eqref{eq:boundary of cylinder} holds for some $ n\ge 1 $. We will prove that \eqref{eq:boundary of cylinder} holds for $ n+1 $. Write $ J_{n+1}=J_{n}\cap T^{-n}(U_{i_n}) $, where $ J_n\in\mathcal F_n $ and $ i_n\in\mathcal I $. By Condition III, the inverse of the map $ T^n|_{J_{n}}\colon J_n\to T^nJ_n $, denoted by $ f_n $, exists and satisfies
	\begin{equation}\label{eq:inverse bounded derivative}
		|f_n(\bx)-f_n(\by)|\le L^{-n}|\bx-\by| \quad\text{for any $ \bx,\by\in T^nJ_n $}.
	\end{equation}
	Since $ f_n $ is invertible and Lipschitz, we have
	\begin{align}
		\partial J_{n+1}&=\partial \big(J_{n}\cap T^{-n}(U_{i_n})\big)=\partial \big(f_n(T^nJ_{n}\cap U_{i_n})\big)\notag\\
		&=f_n\big(\partial(T^nJ_{n}\cap U_{i_n})\big).\label{eq:boundary inverse}
	\end{align}
	Clearly, $ \partial(T^nJ_{n}\cap U_{i_n})\subset \partial (T^nJ_n)\cup \partial U_{i_n} $. Thus, we get
	\[\begin{split}
		\partial J_{n+1}\subset f_n\big(\partial(T^nJ_n)\big)\cup f_n(\partial U_{i_n})=\partial J_n\cup f_n(\partial U_{i_n}).
	\end{split}\]
	Let $ \mathcal E $ be an $ \varepsilon $-net of $ \partial U_{i_n} $ with minimal cardinality. The minimal property implies that the balls with raduis $ \varepsilon/2 $ and center in $ \mathcal E $ are pairwise disjoint. Hence, by Condition IV, we have
	\[\#\mathcal E\cdot \varepsilon^d\asymp \lm^d\big((\partial U_{i_n})_{\varepsilon}\big)\ll \varepsilon^\alpha,\]
	which implies that
	\[\#\mathcal E\ll\varepsilon^{\alpha-d}.\]
	Given that $ f_n $ is Lipschitz with Lipschitz constant $ L^{-n} $, the image of $\mathcal E$ under $f_n$ is a $ L^{-n}\varepsilon $-net of $ f_n(\partial U_{i_n}) $.
	If $L<1$, then we have $L^{-n}>1$ and so $ (f_n(\partial U_{i_n}))_\varepsilon $ is contained in the union of balls with radius $ 2L^{-n}\varepsilon $ and centers in $f_n\mathcal E$. Thus,
	\begin{equation}\label{eq:upppar}
		\lm^d\big((f_n(\partial U_{i_n}))_\varepsilon\big)\ll \#f_n\mathcal E\cdot  (L^{-n}\varepsilon)^d\le \#\mathcal E\cdot  (L^{-n}\varepsilon)^d\ll L^{-nd}\varepsilon^\alpha.
	\end{equation}
	On the other hand, if $L\ge 1$, then $L^{-n}<1$ and so $f_n\mathcal E$ is also an $\varepsilon$-net of $f_n(\partial U_{i_n})$. Thus,
	\[\lm^d\big((f_n(\partial U_{i_n}))_\varepsilon\big)\ll \#f_n\mathcal E\cdot  \varepsilon^d\le \#\mathcal E\cdot  \varepsilon^d\ll \varepsilon^\alpha.\]

	In what follows, we may assume that $L<1$ as the argument for $L\ge 1$ is similar. Using the inductive hypothesis and \eqref{eq:upppar}, we  have
	\begin{align}
		\lm^d\big((\partial J_{n+1})_\varepsilon\big)
		&\le \lm^d\big((\partial J_n)_\varepsilon\big)+\lm^d\big((f_n(\partial U_{i_n}))_\varepsilon\big)\notag\\
		&\ll\frac{L^{-nd}-1}{L^{-d}-1}\varepsilon^\alpha+L^{-nd}\varepsilon^\alpha\label{eq:boundary upper bound}\\
		&=\frac{L^{-(n+1)d}-1}{L^{-d}-1}\varepsilon^\alpha\notag,
	\end{align}
	which proves the claim.

	Note that $\lm^d\big((\partial R)_\varepsilon\big)\ll \varepsilon$. By the same reason as \eqref{eq:upppar}, we may take $ \alpha=1 $ and deduce that
	\[\lm^d\big((f_{n}(\partial R_2))_\varepsilon\big)\ll L^{-nd}\varepsilon.\]
	Similarly, following the same lines as \eqref{eq:boundary upper bound} with $U_{i_n}$ replaced by $R_2$ and note that $ \alpha\ge 1 $ (see Condition IV), we have
	\[\begin{split}
		\lm^d\big((\partial (J_n\cap R_1\cap T^{-n}R_2))_\varepsilon\big)&\le \lm^d\big((\partial R_1)_\varepsilon\big)+\lm^d\big((\partial (J_n\cap T^{-n} R_2))_\varepsilon\big)\\
		&\le \lm^d((\partial R_1)_\varepsilon)+\lm^d\big((\partial J_n)_\varepsilon\big)+\lm^d\big((f_n(\partial R_2))_\varepsilon\big)\\
		&\ll\varepsilon+\frac{L^{-nd}-1}{L^{-d}-1}\varepsilon^\alpha+L^{-nd}\varepsilon\\
		&\ll L^{-nd}\varepsilon^{\min(1,\alpha)}.
	\end{split}\]
	Apply Lemma \ref{l:hqs}, and the proof is finished.
\end{proof}
We will use Condition IV and the last lemma to prove an estimation similar to \cite[Lemma 2.10]{HeLi24}. However, their methods cannot be directly applied to the current setting since the partition $ \{U_i\}_{i\in\mathcal I} $ may be infinite.  For $m<n$, let
\[D(m,n):=\bigcup_{i=0}^{m-1} \bigg(T^{-i}\bigcup_{x\in \mathcal A}B(x,n^{-7/(s\beta_2)})\bigg)\]
and
\[\begin{split}
	\mathcal F_m(n):=\bigg\{U_{i_0}\cap\cdots\cap T^{-(m-1)}U_{i_{m-1}}:U_{i_j}\nsubseteq \bigcup_{x\in \mathcal A}B(x,n^{-7/(s\beta_2)})\text{, $ 0\le j\le m-1$}\bigg\}.
\end{split}\]
 In other words, $D(m,n)$ denotes the set of points whose forward orbits, under the iteration of $T$ up to $m-1$, hit the set $\bigcup_{x\in \mathcal A}B(x,n^{-7/(s\beta_2)})$ at least once, while $\mathcal F_m(n)$ is slightly larger than the complement of $D(m,n)$ for technical reason. It can be checked that
 \[[0,1]^d\setminus D(m,n)\subset \bigcup_{J_m\in\mathcal F_m(n)}J_m.\]
Moreover, by Condition V, we have
\[\#\mathcal F_m(n)\le (K_2 n^{7\beta_1/(s\beta_2)})^m.\]
\begin{lem}\label{l:measure of intersection}
	There exists constants $ c_3 $ and $ 0<\eta<\tau_1 $ such that

	(1) for any $m<n$, we have $ \mu\big(D(m,n)\big)\le c_3n^{-6} $.

	(2) for any $ m<\sqrt n $, and any hyperrectangles $ R_1,R_2, R_3\subset [0,1]^d $, we have
	\[\sum_{J_m\in\mathcal F_m(n)}\mu(J_m\cap R_1\cap T^{-m}R_2\cap T^{-n}R_3)\le \big(\mu(R_1\cap T^{-m}R_2)+ c_3e^{-\eta n}\big)\mu(R_3),\]
	for some $0<\eta<\tau_1$, where $\tau_1$ is given in Lemma \ref{l:RcapF}.
\end{lem}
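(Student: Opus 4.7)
My plan is to handle the two parts separately. For part (1), I use $T$-invariance of $\mu$ together with a union bound. Lemma~\ref{l:hqs} combined with the second inequality of Condition V yields
\[
\mu\bigg(\bigcup_{x\in\mathcal A}B\bigl(x,n^{-7/(s\beta_2)}\bigr)\bigg)\le|h|_q\bigl(K_2 n^{-7/s}\bigr)^s=|h|_qK_2^s n^{-7},
\]
and since $D(m,n)$ is a union of $m<n$ such $T$-preimages, $\mu(D(m,n))\le|h|_q K_2^s n^{-6}$.

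For part (2), fix $m<\sqrt n$ and, for each $J_m\in\mathcal F_m(n)$, set $E_{J_m}:=J_m\cap R_1\cap T^{-m}R_2$. Apply Lemma~\ref{l:EcapF} with $E=E_{J_m}$, $F=R_3$, and scale parameter $\varepsilon$ to be chosen:
\[
\mu(E_{J_m}\cap T^{-n}R_3)\le\mu(R_3)\Bigl(\mu(E_{J_m})+|h|_q\lm^d((\partial E_{J_m})_\varepsilon)^s+2Ce^{-\tau n}/\varepsilon\Bigr).
\]
Summing over $J_m$ and using pairwise disjointness of the cylinders, $\sum_{J_m}\mu(E_{J_m})\le\mu(R_1\cap T^{-m}R_2)$, which forms the main term. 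The task then reduces to controlling the two remaining contributions. For the boundary piece I use the Lebesgue estimate $\lm^d((\partial E_{J_m})_\varepsilon)\ll\max(m,L^{-md})\varepsilon^{\min(1,\alpha)}$, which is the intermediate bound established inside the proof of Lemma~\ref{l:JcRcR} (with $n$ replaced by $m$), together with the cardinality bound $\#\mathcal F_m(n)\le(K_2 n^{7\beta_1/(s\beta_2)})^m$ from Condition V. Since $m<\sqrt n$, both $\#\mathcal F_m(n)$ and $\max(m,L^{-md})^s$ are of order $e^{O(\sqrt n\log n)}$, i.e.\ subexponential in $n$.

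The main obstacle is choosing $\varepsilon$ so that the total boundary contribution, of size roughly $e^{O(\sqrt n\log n)}\varepsilon^{s\min(1,\alpha)}$, and the total correlation contribution, of size roughly $e^{O(\sqrt n\log n)}e^{-\tau n}/\varepsilon$, both fall below $e^{-\eta n}$ for a single $\eta<\tau_1=s\tau/(1+s)$. Setting $\varepsilon=e^{-\gamma n}$ with $\gamma=\tau/(1+s\min(1,\alpha))$ balances the two rates and yields an effective decay of order $e^{-s\min(1,\alpha)\tau n/(1+s\min(1,\alpha))}$, a quantity bounded above by $\tau_1$. Since the conclusion requires only the existence of some admissible $\eta$, any fixed $\eta$ strictly below this balanced rate works for large $n$, with subexponential prefactors absorbed into $c_3$ by enlarging it. The cutoff $m<\sqrt n$ is what keeps these prefactors subexponential despite the polynomial-in-$n$ growth of $\#\mathcal F_m(n)$ coming from the possibly infinite partition $\{U_i\}_{i\in\mathcal I}$; this is the central technical issue that prevents a direct adaptation of the corresponding argument in \cite[Lemma 2.10]{HeLi24}.
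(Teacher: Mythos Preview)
Your proposal is correct and follows essentially the same route as the paper: part (1) via $T$-invariance, a union bound, and Lemma~\ref{l:hqs} with Condition~V; part (2) via Lemma~\ref{l:EcapF} applied to each $E_{J_m}=J_m\cap R_1\cap T^{-m}R_2$, summing the main terms by disjointness, bounding the boundary terms through Lemma~\ref{l:JcRcR} (with $m$ in place of $n$), and controlling the total error using $\#\mathcal F_m(n)\le (K_2 n^{7\beta_1/(s\beta_2)})^m$ together with the subexponential growth forced by $m<\sqrt n$. The only cosmetic difference is that the paper simply takes $\varepsilon=e^{-\tau n/2}$, whereas you balance the two error terms with $\varepsilon=e^{-\tau n/(1+s\min(1,\alpha))}$; both choices yield an exponential rate strictly below $\tau_1=s\tau/(1+s)$, so either works.
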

\begin{proof}
	(1) By Lemma \ref{l:hqs} and Condition V,
	\begin{align*}
		\mu(D(m,n))&\le m\mu\bigg(\bigcup_{x\in \mathcal A}B(x,n^{-7/(s\beta_2)})\bigg)\ll m\lm^d\bigg(\bigcup_{x\in \mathcal A}B(x,n^{-7/(s\beta_2)})\bigg)^s\\
		&\ll m\cdot n^{-7}\le n^{-6}.
	\end{align*}
	(2) Write $L_m=\max(m,L^{-dm})$ and $\kappa=\min(1,\alpha)$. Applying Lemmas \ref{l:EcapF} and \ref{l:JcRcR} with $ \varepsilon=e^{-\tau n/2} $, we have
	\begin{align}
		&\sum_{J_m\in\mathcal F_m(n)}\mu(J_m\cap R_1\cap T^{-m}R_2\cap T^{-n}R_3)\notag\\
		\le &\sum_{J_m\in\mathcal F_m(n)}(\mu(J_m\cap R_1\cap T^{-m}R_2)+c_2(L_me^{-\kappa\tau n/2})^s+c_1e^{-\tau n/2})\mu(R_3)\notag\\
		\le &\Big(\mu(R_1\cap T^{-m}R_2)+ \# \mathcal F_m(n)\big(c_2(L_me^{-\kappa\tau n/2})^s+c_1e^{-\tau n/2}\big)\Big)\mu(R_3).\label{eq:exponentiall decay intersection of three balls}
	\end{align}
	Since $ m\le \sqrt n $, both terms
	\[ \# \mathcal F_m(n)\le  (K_2 n^{7\beta_1/(s\beta_2)})^m\le e^{\sqrt n\log K_2+7\beta_1\sqrt n\log n/(s\beta_2)}\]
	and
	\[L_m=\max(m,L^{-dm})\le\max(\sqrt n, e^{-d\sqrt{n}\log L})\]
	grow at most subexponentially. Thus, there exist $ c_3>0 $ and $ 0<\eta<\tau_1 $ such that \eqref{eq:exponentiall decay intersection of three balls} is bounded by
	\[\big(\mu(R_1\cap T^{-m}R_2)+ c_3e^{-\eta n}\big)\mu(R_3).\qedhere\]
\end{proof}

\section{Proof of Theorem \ref{t:main}}
This section is dedicated to the proof of Theorem \ref{t:main}. To begin, assume that $ \sum_{n=1}^{\infty}r_{n,1}\cdots r_{n,d}<\infty $. In \cite[Proposition 2.1]{HeLi24}, the author and Liao established that $ \mu(\rc(\{\br_n\}))=0 $ under Condition II and the following exponential decay of correlations for hyperrectangles $ R_1 $ and $ R_2 $,
\[|\mu(R_1\cap T^{-n}R_2)-\mu(R_1)\mu(R_2)|\le \mu(R_2)c_1e^{-\tau_1n}.\]
Such property is also assumed in the present setting (see Condition I and Lemma \ref{l:RcapF}), so we can employ the same argument as in \cite[Proposition 2.1]{HeLi24} to conclude the convergence part of the theorem.

Now, suppose that
\begin{equation}\label{eq:div}
	\sum_{n=1}^{\infty}r_{n,1}\cdots r_{n,d}=\infty.
\end{equation}
In what follows, to simplify the notation, write
\begin{equation}\label{eq:gamman}
	\gamma_n=r_{n,1}\cdots r_{n,d}.
\end{equation}
Without loss of generality, we further assume that for all $ n\in\N $,
\begin{equation}\label{eq:assr=0}
	\text{either\quad $ \gamma_n=0 $\quad or\quad $ \gamma_n\ge n^{-2} $.}
\end{equation}
We will prove the divergence part of the theorem under these assumptions. Indeed, let
\[D=\{n\in\N: \gamma_n> n^{-2}\} \quad\text{and}\quad D_n=\{1,\dots, n\}\cap D. \]
Then, by \eqref{eq:div} and the fact that $ \sum_{n=1}^\infty n^{-2}<\infty $, we have
\[\lim_{n\to\infty}\dfrac{\sum_{k\in D_n}\gamma_k}{\sum_{k=1}^n\gamma_k}=1.\]
On the other hand, since
\[\sum_{k\in\N\setminus D}\gamma_k\le \sum_{k\in\N\setminus D}k^{-2}<\infty,\]
the convergence part of Theorem \ref{t:main} implies that for $\mu$-almost every $ \bx $,
\[\sum_{k\in\N\setminus D}\mathbbm{1}_{R(\bx,\br_k)}(T^k\bx)<\infty.\]
For any such $ \bx $, it follows that
\begin{align}
	&\lim_{n\to\infty}\dfrac{\sum_{k\in D_n}\mathbbm{1}_{R(\bx,\br_k)}(T^k\bx)}{\sum_{k\in D_n}2^d\gamma_k}=h(\bx)\label{eq:subset}\\
	\Longleftrightarrow\quad&\lim_{n\to\infty}\dfrac{\sum_{k=1}^n\mathbbm{1}_{R(\bx,\br_k)}(T^k\bx)}{\sum_{k=1}^n2^d\gamma_k}=h(\bx)\notag.
\end{align}
Then, the divergence part of Theorem \ref{t:main} holds provided that \eqref{eq:subset} holds for $ \mu $-almost every $ \bx $.

Given that the density $h$ of $\mu$ may be unbounded, it is more appropriate to study the following auxiliary sets that inspired by \cite{KKP21}. For any $ \bx\in[0,1]^d $ and $ n\in\N $, let $ l_n(\bx)\in\R_{\ge 0} $ be the non-negative number such that $ \mu(R(\bx,l_n(\bx)\br_n))=\gamma_n $. This is possible, since $\mu$ is non-singular. Let
\[\xi_n(\bx):=l_n(\bx)\br_n\in(\R_{\ge 0})^d.\]
Then $ R(\bx, \xi_n(\bx)) $ is a hyperrectangle obtained by scaling $ R(\bx, \br_n) $ by a factor $ l_n(\bx) $. Define
\[\hat E_n:=\{\bx\in[0,1]^d:T^n\bx\in R(\bx, \xi_n(\bx))\}.\]

The key ingredient for the proof of the divergence case is the utilise of quantitative form of the divergence Borel-Cantelli lemma.
\begin{thm}[{\cite[Chapter I, Lemma 10]{Spr79}}]\label{t:sbc}
	Let $ \{A_n\} $ be a sequence of measurable sets in a probability space $ (X,\nu) $. Denote by $ A(N,x) $ the number of integers $ n\le N $ such that $ x\in A_n $. Put
	\[\phi(N)=\sum_{n=1}^N\nu(A_n).\]
	Suppose that there exists a constant $ c $ such that for any $ 1\le M<N $,
	\[\sum_{M\le m<n\le N}\big(\nu(A_{n}\cap A_m)-\nu (A_n)\nu(A_m)\big)\le \sum_{n=M}^Nc\nu(A_n).\]
	Then for any $ \varepsilon>0 $ one has
	\[A(N,x)=\phi(N)+O_\varepsilon\big(\phi^{1/2}(N)\log^{3/2+\varepsilon}\phi(N)\big)\]
	for $ \nu $-almost every $ x $; in particular, if $ \sum_{n=1}^\infty\nu(A_n)=\infty $, then
	\[\lim_{N\to\infty}\frac{A(N,x)}{\phi(N)}=1\quad\text{for $ \nu $-a.e.\,$ x $}.\]
\end{thm}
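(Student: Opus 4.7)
The plan is to prove the theorem by a standard second-moment argument combined with a Borel--Cantelli selection along a geometric subsequence of indices, upgraded by a Rademacher--Menshov type maximal inequality to fill in the gaps. Throughout, set $Z_n(x) := \mathbbm{1}_{A_n}(x) - \nu(A_n)$ and, for $1 \le M \le N$, $S_{M,N}(x) := \sum_{n=M}^{N} Z_n(x)$. Then $A(N,x) - \phi(N) = S_{1,N}(x)$, so the task reduces to proving the pointwise bound $|S_{1,N}| \ll \phi(N)^{1/2}(\log \phi(N))^{3/2+\varepsilon}$ for $\nu$-a.e.\ $x$. We may assume $\phi(N) \to \infty$ (otherwise the ``in particular'' conclusion is vacuous and the $O_\varepsilon$ bound is trivial).

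The first step is a uniform variance estimate. Expanding the square and splitting the double sum into $m < n$, $n < m$ and the diagonal,
\begin{align*}
	\int S_{M,N}^2 \,\d\nu &= \sum_{n=M}^{N}\bigl(\nu(A_n)-\nu(A_n)^2\bigr) + 2\!\!\sum_{M\le m<n\le N}\!\bigl(\nu(A_m\cap A_n)-\nu(A_m)\nu(A_n)\bigr) \\
	&\le (1+2c)\bigl(\phi(N) - \phi(M-1)\bigr),
\end{align*}
using the hypothesis for the off-diagonal and the trivial bound $\nu(A_n) - \nu(A_n)^2 \le \nu(A_n)$ on the diagonal. Next I would select a rapid subsequence $\{N_k\}$ with $\phi(N_k) \in [2^k, 2^{k+1})$. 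Applying Chebyshev's inequality with threshold $\phi(N_k)^{1/2} k^{3/2+\varepsilon}$ gives
\[
	\nu\Bigl(|S_{1,N_k}| > \phi(N_k)^{1/2} k^{3/2+\varepsilon}\Bigr) \le \frac{(1+2c)\phi(N_k)}{\phi(N_k)\, k^{3+2\varepsilon}} \ll k^{-(3+2\varepsilon)},
\]
which is summable in $k$, so the Borel--Cantelli lemma yields $|S_{1,N_k}| \le \phi(N_k)^{1/2} k^{3/2+\varepsilon}$ for all sufficiently large $k$, $\nu$-almost everywhere.

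To upgrade the subsequence bound to all $N$, I would invoke a Rademacher--Menshov style maximal inequality tailored to the quasi-orthogonal family $\{Z_n\}$: for each block $(N_k, N_{k+1}]$, a dyadic decomposition of the interval combined with Cauchy--Schwarz and the variance estimate above yields
\[
	\int \max_{N_k < N \le N_{k+1}} |S_{N_k+1, N}|^2 \,\d\nu \ll \log^2\!\bigl(\phi(N_{k+1}) - \phi(N_k)\bigr)\cdot\bigl(\phi(N_{k+1}) - \phi(N_k)\bigr) \ll k^2\, 2^{k}.
\]
A second Chebyshev--Borel--Cantelli argument with threshold $2^{k/2}k^{3/2+\varepsilon}$ then gives summable probabilities $\ll k^{-(1+2\varepsilon)}$, so this maximum is $\ll 2^{k/2} k^{3/2+\varepsilon}$ eventually. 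Combining with the subsequence estimate by the triangle inequality $|S_{1,N}| \le |S_{1,N_k}| + \max_{N_k < N' \le N_{k+1}}|S_{N_k+1,N'}|$ and noting $k \asymp \log \phi(N_k) \asymp \log \phi(N)$ for $N \in (N_k, N_{k+1}]$ finishes the proof; the ``in particular'' statement follows since the error term is $o(\phi(N))$ when $\phi(N) \to \infty$. The main obstacle is the Rademacher--Menshov step: it requires carefully dissecting $(N_k, N_{k+1}]$ into at most $O(\log(N_{k+1}-N_k))$ dyadic sub-blocks so that any partial sum is a union of $O(\log)$ such blocks, then applying Cauchy--Schwarz and the block-wise variance bound; this is where the extra logarithmic factor in the exponent $3/2+\varepsilon$ (rather than the naive $1/2+\varepsilon$) is produced.
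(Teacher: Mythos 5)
The paper does not prove this statement---it is quoted as \cite[Chapter I, Lemma 10]{Spr79}---so there is no in-paper proof to compare against. Your overall strategy (the uniform second-moment bound, Chebyshev plus Borel--Cantelli along a subsequence $\{N_k\}$ with $\phi(N_k)\asymp 2^k$, and a Rademacher--Menshov/G\'al--Koksma maximal inequality to interpolate) is the standard route to this lemma, and the variance estimate, the subsequence step, and the final assembly with $k\asymp\log\phi(N)$ are all correct.

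The gap is in the maximal-inequality step, precisely where you locate ``the main obstacle''. You claim
\[
\int \max_{N_k<N\le N_{k+1}}|S_{N_k+1,N}|^2\dif\nu \ \ll\ \log^2\bigl(\phi(N_{k+1})-\phi(N_k)\bigr)\cdot\bigl(\phi(N_{k+1})-\phi(N_k)\bigr)\ \ll\ k^2\,2^k,
\]
but the dissection you actually describe---``$O(\log(N_{k+1}-N_k))$ dyadic sub-blocks'' of the index interval---produces the factor $\log^2(N_{k+1}-N_k)$, not $\log^2(\phi(N_{k+1})-\phi(N_k))$. These are not comparable: nothing in the hypotheses prevents $N_{k+1}-N_k$ from being super-exponentially large in $2^k$, since many of the $\nu(A_n)$ may be extremely small or zero (as indeed happens in the application in this paper, where $\gamma_n=0$ is allowed). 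In that case $\log^2(N_{k+1}-N_k)\cdot 2^k$ is not $\ll k^2 2^k$ and the probabilities in your second Chebyshev--Borel--Cantelli step are no longer summable. The correct decomposition must be taken with respect to the mass $\phi$ rather than the index count: split $(N_k,N_{k+1}]$ at the indices where $\phi$ crosses the dyadic subdivision points of $[\phi(N_k),\phi(N_{k+1})]$, giving $O(k)$ levels whose block variances still sum to $O(2^k)$ at each level. This forces the dyadic tree to stop at blocks of mass $O(1)$ (a single $\nu(A_n)$ can be as large as $1$, so one cannot refine the mass further), and a partial sum terminating strictly inside such a finest block is then not a union of dyadic blocks. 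Controlling it requires the non-negativity of the summands: if $N$ lies inside a finest block $(a,b]$, then $0\le\sum_{a<n\le N}\mathbbm{1}_{A_n}\le\sum_{a<n\le b}\mathbbm{1}_{A_n}$ and $0\le\sum_{a<n\le N}\nu(A_n)\le 1$, whence $|S_{a+1,N}|\le|S_{a+1,b}|+O(1)$. Both ingredients---the mass-based dissection and this positivity/monotonicity argument at the finest scale---are missing from your write-up; once they are inserted, the proof closes.
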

To apply Theorem \ref{t:sbc}, it is essential to estimate both the measure of each $ \hat E_n $ and the correlations among the sets $ \{\hat E_n\} $. These aspects will be addressed in the upcoming two subsections, respectively.

\subsection{Lower estimate the measure of $ \hat E_n $}
By \eqref{eq:assr=0} we may assume that $ \gamma_n\ge n^{-2} $. For otherwise by \eqref{eq:assr=0} we would have $ \gamma_n=0 $, and so $ \mu(\hat E_n)=0 $. The following two lemmas coming from [18] are useful in estimating the $\mu$-measure of $ \hat E_n $. The first one describes some local structures of $ \hat E_n $; the other one calculates the $\mu$-measure of hyperrectangles related to $ \hat E_n $.

\begin{lem}\label{l:simple lemma s}
	Let $ B(\bx,r) $ be a ball with center $ \bx $ and $ r>0 $. Then, for any $ n\in\N $ with $ \gamma_n\ne 0 $ and any $ F\subset B(\bx,r) $, we have
	\[F\cap T^{-n}R(\bx,\xi_n(\bx)-2rn^{2}\br_n)\subset F\cap\hat E_n\subset F\cap T^{-n} R(\bx,\xi_n(\bx)+2rn^2\br_n).\]
\end{lem}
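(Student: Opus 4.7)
My plan is to prove both inclusions by reducing them to two elementary estimates: a coordinate-wise triangle inequality for the shift of centers, and a Lipschitz-type bound on the dilation factor $l_n$ viewed as a function of the center.

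First, I would exploit the working assumption $\gamma_n \ge n^{-2}$ (cf.\ \eqref{eq:assr=0}) to obtain a lower bound on each $r_{n,i}$. Since we may assume $r_{n,i} \le 1$ for all $i$ (otherwise the rectangle is truncated by $[0,1]^d$), the identity $\gamma_n = \prod_j r_{n,j} \ge n^{-2}$ together with $r_{n,j} \le 1$ for $j \ne i$ forces $r_{n,i} \ge n^{-2}$, whence $r/r_{n,i} \le rn^2$. Next, the hypothesis $\by \in B(\bx, r)$ combined with coordinate-wise triangle inequality gives
\[R(\by, l_n(\by)\br_n) \subseteq R(\bx, l_n(\by)\br_n + r\mathbf{1}) \subseteq R(\bx, (l_n(\by) + rn^2)\br_n),\]
where the second containment uses $r \le rn^2\, r_{n,i}$ for each $i$. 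Taking $\mu$-measures and using the defining equalities $\mu(R(\by, l_n(\by)\br_n)) = \gamma_n = \mu(R(\bx, l_n(\bx)\br_n))$ together with the monotonicity of $l \mapsto \mu(R(\bx, l\br_n))$ forces $l_n(\bx) \le l_n(\by) + rn^2$. Swapping the roles of $\bx$ and $\by$ yields the Lipschitz-type estimate $|l_n(\bx) - l_n(\by)| \le rn^2$.

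With this bound, the right inclusion is immediate. For $\by \in F \cap \hat E_n$, the membership $T^n\by \in R(\by, \xi_n(\by))$ combined with $|\by - \bx| \le r$ gives $|T^n\by_i - x_i| \le l_n(\by) r_{n,i} + r$ for each coordinate, and this is at most $(l_n(\bx) + rn^2) r_{n,i} + rn^2\, r_{n,i} = (l_n(\bx) + 2rn^2) r_{n,i}$ by the Lipschitz estimate and $r \le rn^2 \, r_{n,i}$. Hence $T^n\by \in R(\bx, \xi_n(\bx) + 2rn^2\br_n)$. The left inclusion is verified by a symmetric chain of inequalities starting from $T^n\by \in R(\bx, \xi_n(\bx) - 2rn^2\br_n)$.

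The main obstacle is the Lipschitz-type comparison of $l_n(\bx)$ and $l_n(\by)$: without the coordinate lower bound $r_{n,i} \ge n^{-2}$ extracted from $\gamma_n \ge n^{-2}$, the center shift $r$ cannot be absorbed into a uniform dilation of the scaling factor, and the lemma would fail. Once this lower bound is isolated and combined with the monotonicity of $l \mapsto \mu(R(\bx, l\br_n))$, the rest of the argument is a routine bookkeeping of the triangle inequality.
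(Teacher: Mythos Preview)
The paper does not give its own proof of this lemma; it is imported verbatim from the author's earlier work \cite{HeLi24} (see the sentence preceding the statement). Your argument is the natural one and is correct: the coordinate lower bound $r_{n,i}\ge n^{-2}$ extracted from \eqref{eq:assr=0} together with $r_{n,j}\le 1$ is precisely what allows the center shift $r$ to be absorbed as a dilation $rn^{2}$ of the scaling factor, and your derivation of $|l_n(\bx)-l_n(\by)|\le rn^{2}$ followed by the coordinatewise triangle inequality gives both inclusions exactly as claimed.

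Two minor points are worth tightening. First, the hypothesis $r_{n,j}\le 1$: in the divergence part one assumes $|\br_n|\to 0$, so this holds for all large $n$; for small $n$ one may replace each $r_{n,j}$ by $\min(r_{n,j},1)$ without changing $R(\bx,\br_n)\cap[0,1]^d$, so the reduction is harmless. Second, your deduction ``$\mu(R(\bx,(l_n(\by)+rn^{2})\br_n))\ge\gamma_n$ implies $l_n(\bx)\le l_n(\by)+rn^{2}$'' needs $l_n(\bx)$ to be the \emph{smallest} $l$ with $\mu(R(\bx,l\br_n))=\gamma_n$, since the density $h$ may vanish on open sets and hence $l\mapsto\mu(R(\bx,l\br_n))$ need not be strictly increasing. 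Fixing this convention (the infimum is attained by continuity of the map, as $\mu$ has no atoms) closes the gap.
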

\begin{lem}\label{l:measure of enlarged ball}
	For any $ \br\in(\R_{\ge 0})^d $, we have
	\begin{align*}
		\mu\big(R(\bx,\xi_n(\bx)+\br)\big)&\le \gamma_n+2d|h|_q|\br|^s,\\
		\mu\big(R(\bx,\xi_n(\bx)-\br)\big)&\ge \gamma_n-2d|h|_q|\br|^s.
	\end{align*}
\end{lem}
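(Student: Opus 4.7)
The approach is to reduce the problem to the geometric observation that enlarging (or shrinking) a centred hyperrectangle by $\br$ changes its Lebesgue measure by an amount controlled by $|\br|$, and then transport this bound to the $\mu$-measure through Lemma \ref{l:hqs}.

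For the upper bound, I would set $F:=R(\bx,\xi_n(\bx)+\br)\setminus R(\bx,\xi_n(\bx))$ and decompose it into $2d$ disjoint ``face slabs,'' one for the $+$ and $-$ side of each coordinate axis. The slab on the $+$ side of axis $i$ is a hyperrectangle of thickness $r_i$ in direction $i$ and of side length at most $\min\{2(\xi_{n,j}+r_j),1\}\le 1$ in each of the remaining directions once intersected with $[0,1]^d$. Hence its Lebesgue measure is at most $r_i\le|\br|$, and summing over the $2d$ slabs gives $\lm^d(F)\le 2d|\br|$. Lemma \ref{l:hqs} then yields
\[\mu(F)\le |h|_q\lm^d(F)^s\le |h|_q(2d|\br|)^s\le 2d|h|_q|\br|^s,\]
where the last step uses $0<s<1$ so that $(2d)^s\le 2d$. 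Since $\mu(R(\bx,\xi_n(\bx)))=\gamma_n$ by definition of $\xi_n(\bx)$, adding $\mu(F)$ gives the first inequality.

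For the lower bound, I would apply the same slab decomposition, this time to $F':=R(\bx,\xi_n(\bx))\setminus R(\bx,\xi_n(\bx)-\br)$, in the case where every coordinate of $\xi_n(\bx)-\br$ is non-negative; the slab estimate again gives $\lm^d(F')\le 2d|\br|$, hence $\mu(F')\le 2d|h|_q|\br|^s$, and subtracting from $\gamma_n$ yields the second inequality. In the remaining case where $\xi_{n,i}<r_i$ for some $i$, the rectangle $R(\bx,\xi_n(\bx)-\br)$ is empty and I have to check that $\gamma_n\le 2d|h|_q|\br|^s$. This drops out of the same slab bound applied to $R(\bx,\xi_n(\bx))$ itself: its $i$-th side has length at most $2r_i\le 2|\br|$ in $[0,1]^d$, so its Lebesgue measure is at most $2|\br|$, and Lemma \ref{l:hqs} gives $\gamma_n\le |h|_q(2|\br|)^s\le 2d|h|_q|\br|^s$.

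I do not foresee a serious obstacle. The argument is essentially combinatorial-geometric together with the $L^q$ bound built into Lemma \ref{l:hqs}; the only mild bookkeeping point is absorbing $(2d)^s$ into $2d$, which is immediate from $s\le 1$, and handling the degenerate case in which subtracting $\br$ pushes a side length below zero.
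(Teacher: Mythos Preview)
The paper does not prove this lemma; it is imported from reference~[18] without proof. Your argument is correct and is exactly the natural one: the annular region $R(\bx,\xi_n(\bx)+\br)\setminus R(\bx,\xi_n(\bx))$ is covered by $2d$ face slabs, each of thickness $r_i\le|\br|$ in one direction and side length at most $1$ in the remaining directions after intersecting with $[0,1]^d$, so each has Lebesgue measure $\le|\br|$; Lemma~\ref{l:hqs} then converts this to a $\mu$-bound, and the definition $\mu(R(\bx,\xi_n(\bx)))=\gamma_n$ finishes the job. Your treatment of the degenerate case $\xi_{n,i}(\bx)<r_i$ in the lower bound is also correct.

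One cosmetic remark: instead of applying Lemma~\ref{l:hqs} to the entire annulus and then invoking $(2d)^s\le 2d$, you can apply Lemma~\ref{l:hqs} to each of the $2d$ slabs separately and sum, obtaining $2d|h|_q|\br|^s$ directly. This is presumably how the exact constant $2d$ in the statement arises, and it avoids the extra inequality.
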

Since $0<\eta<\tau_1<\tau$, here and hereafter, we will replace $ \tau $ and $ \tau_1 $ in Lemmas \ref{l:RcapF} and \ref{l:JcRcR} with $ \eta $, and the estimates still hold. Recall that a ball here is with respect to the maximum norm and thus corresponds to a Euclidean hypercube. The geometry of $ [0,1]^d $ allows us to write it as the union of $ n^{2d+4d/s} $ pairwise disjoint balls with common radius $ n^{-2-4/s} $. The collection of these balls is denoted by
\begin{equation}\label{eq:Bn}
	\mathcal B_n:=\{B(\bx_i,n^{-2-4/s}):1\le i\le n^{2d+4d/s}\}.
\end{equation}
Since $ n^{2+4/s} $ is in general non-integer we should use $ r=1/[n^{2+4/s}] $ instead of $ n^{-2-4/s} $ in the definition of $\mathcal B_n$ so that $[0,1]^d$ can be partitioned into $1/r$ balls of radius $r$, where $ [\cdot] $ denotes the integer part of a real number. However, to improve the readability we let the
correct definition be implicitly understood throughout the proof. The outcome is invariant under
this abuse of notation.

Now, we estimate the $\mu$-measure of $ \hat E_n $.
	\begin{lem}\label{l:measure of hat En}
	There exists a constant $ c_4 $ such that
	\[\mu(\hat E_n)\ge(1-c_4n^{-2})\gamma_n.\]
\end{lem}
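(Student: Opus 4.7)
The plan is to decompose $[0,1]^d$ into the partition $\mathcal B_n$, apply the local inclusion from Lemma \ref{l:simple lemma s} inside each small cube, and sum up using the correlation estimate from Lemma \ref{l:RcapF} together with the measure bound from Lemma \ref{l:measure of enlarged ball}. The key observation is that the cubes in $\mathcal B_n$ have radius $r = n^{-2-4/s}$, so the error terms introduced by replacing $\xi_n(\bx)$ with $\xi_n(\bx_i)$ (where $\bx_i$ is the center of the cube containing $\bx$) scale like $rn^2|\br_n| = n^{-4/s}|\br_n|$, which is small enough that the $\mu$-measure of the relevant hyperrectangle only shrinks by a factor $1-O(n^{-2})$.

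Concretely, I first fix $n$ large enough that $|\br_n|\le 1$ (using the hypothesis $\lim|\br_n|=0$), and assume $\gamma_n\ge n^{-2}$ by \eqref{eq:assr=0}. For each $B=B(\bx_i,r)\in\mathcal B_n$, Lemma \ref{l:simple lemma s} applied with $F=B$ gives
\[
B\cap T^{-n}R_i\subset B\cap\hat E_n,\qquad R_i:=R(\bx_i,\xi_n(\bx_i)-2n^{-4/s}\br_n).
\]
Since the cubes in $\mathcal B_n$ are pairwise disjoint, summing over $B$ yields
\[
\mu(\hat E_n)\ge\sum_{B\in\mathcal B_n}\mu(B\cap T^{-n}R_i).
\]
Applying Lemma \ref{l:RcapF} (with $\tau_1$ replaced by $\eta$ per the convention stated after Lemma \ref{l:measure of enlarged ball}) to the cube $B$ and the hyperrectangle $R_i$ gives
\[
\mu(B\cap T^{-n}R_i)\ge\mu(B)\mu(R_i)-c_1\mu(R_i)e^{-\eta n}.
\]

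Next, Lemma \ref{l:measure of enlarged ball} with $\br=2n^{-4/s}\br_n$ yields
\[
\mu(R_i)\ge\gamma_n-2d|h|_q(2n^{-4/s}|\br_n|)^s\ge\gamma_n-2^{s+1}d|h|_qn^{-4}\ge\gamma_n\bigl(1-2^{s+1}d|h|_qn^{-2}\bigr),
\]
where the last inequality uses $\gamma_n\ge n^{-2}$. Using $\mu(R_i)\le\gamma_n$ in the negative term and $\sum_B\mu(B)=1$, $\#\mathcal B_n\asymp n^{2d+4d/s}$ gives
\[
\mu(\hat E_n)\ge\gamma_n\bigl(1-2^{s+1}d|h|_qn^{-2}\bigr)-c_1\gamma_nn^{2d+4d/s}e^{-\eta n}.
\]

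The main (very mild) obstacle is confirming that the correlation error $c_1 n^{2d+4d/s}e^{-\eta n}$, accumulated over the $\asymp n^{2d+4d/s}$ cubes, is $O(n^{-2})$; this follows because the exponential decay dominates any polynomial, so for $n$ sufficiently large this term is absorbed into $C n^{-2}$ for some fixed $C$. Combining both bounds and choosing $c_4$ large enough to absorb the finitely many small-$n$ cases (where one can simply take the right-hand side to be nonpositive) yields $\mu(\hat E_n)\ge(1-c_4n^{-2})\gamma_n$, as required.
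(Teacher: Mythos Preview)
Your proof is correct and follows essentially the same approach as the paper: partition $[0,1]^d$ into the cubes of $\mathcal B_n$, apply Lemma~\ref{l:simple lemma s} to each cube, then use Lemma~\ref{l:RcapF} and Lemma~\ref{l:measure of enlarged ball} and sum. The only cosmetic differences are that the paper writes the lower bound as a product $(1-n^{2d+4d/s}c_1e^{-\eta n})(\gamma_n-4d|h|_qn^{-4})$ before simplifying, and does not explicitly invoke $|\br_n|\le 1$ (absorbing it into the constant), but the argument is otherwise identical.
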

\begin{proof}
		For each ball $ B_{n,i}=B(\bx_i,n^{-2-4/s})\in\mathcal B_n $, by Lemma \ref{l:simple lemma s} we have
	\[B_{n,i}\cap\hat E_n\supset B_{n,i}\cap T^{-n}R(\bx_i,\xi_n(\bx_i)-2n^{-2-4/s}n^2\br_n).\]
	Hence,
	\begin{equation}\label{eq:B(x,r) cap Em cap En}
		\hat E_n\supset \bigcup_{B_{n,i}\in\mathcal B_n}B_{n,i}\cap  T^{-n}R(\bx_i,\xi_n(\bx_i)-2n^{-4/s}\br_n).
	\end{equation}
	Applying Lemmas \ref{l:RcapF} and \ref{l:measure of enlarged ball}, we have
	\begin{align*}
		&\mu\big(B_{n,i}\cap  T^{-n}R(\bx_i,\xi_n(\bx_i)-2n^{-4/s}\br_n)\big)\notag\\
		\ge& (\mu(B_{n,i})-c_1e^{-\eta n})\mu\big(R(\bx_i,\xi_n(\bx_i)-2n^{-4/s}\br_n)\big)\\
		\ge&(\mu(B_{n,i})-c_1e^{-\eta n})(\gamma_n-4d|h|_qn^{-4}).
	\end{align*}
Summing over all $B_{n,i}\in\mathcal B_{n} $, we deduce that the $ \mu $-measure of $ \hat E_n $ is at least
	\[(1-n^{2d+4d/s}\cdot c_1e^{-\eta n})(\gamma_n-4d|h|_qn^{-4}).\]
	Since
	\[n^{2d+4d/s}e^{-\eta n}\ll n^{-2}\quad\text{and}\quad n^{-4}\le n^{-2}\gamma_n,\]
	we conclude that there exists a constant $ c_4 $ such that
	\[\mu(\hat E_n)\ge (1-c_4n^{-2})\gamma_n.\qedhere\]
\end{proof}
\subsection{Estimate the correlations of $ \hat E_m\cap \hat E_n $ with $ m< n $}
\begin{lem}\label{l:estimations on correlations 1}
	There exists a constant $ c_5 $ such that for any integers $ m $ and $ n $ with $ \sqrt n\le m<n $, we have
	\begin{equation*}
		\mu(\hat E_m\cap\hat E_n)\le (1+c_5n^{-2})\gamma_m\gamma_n+c_5e^{-\eta(n-m)}\gamma_n.
	\end{equation*}
\end{lem}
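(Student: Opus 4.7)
The plan is to localize the intersection $\hat E_m\cap\hat E_n$ using the partition $\mathcal B_n=\{B_{n,i}\}$ from \eqref{eq:Bn}, and then to apply the exponential decay of correlations for hyperrectangles (Lemma \ref{l:RcapF}) twice: once at lag $m$ to peel off the ball $B_{n,i}$, and once at lag $n-m$ to decouple the two rectangles that remain. On each cell $B_{n,i}=B(\bx_i,n^{-2-4/s})$ I invoke Lemma \ref{l:simple lemma s} with $F=B_{n,i}$ and $r=n^{-2-4/s}$ for both indices $m$ and $n$. Using $m\le n$ to simplify the enlargement factor $2rm^2$ into $2n^{-4/s}$, and writing $R^+_j=R(\bx_i,\xi_j(\bx_i)+2n^{-4/s}\br_j)$ for $j\in\{m,n\}$, I obtain the key inclusion
\[
B_{n,i}\cap\hat E_m\cap\hat E_n\subset B_{n,i}\cap T^{-m}R^+_m\cap T^{-n}R^+_n.
\]

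Next, rewriting the right-hand side as $B_{n,i}\cap T^{-m}\bigl(R^+_m\cap T^{-(n-m)}R^+_n\bigr)$, I apply Lemma \ref{l:RcapF} with shift $m$ (treating the ball $B_{n,i}$ in the sup norm as a hyperrectangle) and then again with shift $n-m$ to the two rectangles $R^+_m$ and $R^+_n$, which produces
\[
\mu(B_{n,i}\cap T^{-m}R^+_m\cap T^{-n}R^+_n)\le\bigl(\mu(B_{n,i})+c_1e^{-\eta m}\bigr)\bigl(\mu(R^+_m)\mu(R^+_n)+c_1\mu(R^+_n)e^{-\eta(n-m)}\bigr).
\]
Lemma \ref{l:measure of enlarged ball} gives the uniform upper bounds $\mu(R^+_j)\le\gamma_j+4d|h|_q n^{-4}$, so after summing over the at most $n^{2d+4d/s}$ cells of $\mathcal B_n$ (using $\sum_i\mu(B_{n,i})=1$), I arrive at
\[
\mu(\hat E_m\cap\hat E_n)\le\bigl(1+n^{2d+4d/s}c_1e^{-\eta m}\bigr)\Bigl[(\gamma_m+4d|h|_q n^{-4})(\gamma_n+4d|h|_q n^{-4})+c_1(\gamma_n+4d|h|_q n^{-4})e^{-\eta(n-m)}\Bigr].
\]

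The remaining work is bookkeeping, and it is where the two hypotheses enter. From \eqref{eq:assr=0} together with $m\le n$ I have $\gamma_m\ge m^{-2}\ge n^{-2}$ and $\gamma_n\ge n^{-2}$, so every additive $n^{-4}$ error can be absorbed either into $n^{-2}\gamma_m\gamma_n$ on the main term or into $\gamma_n$ on the term carrying $e^{-\eta(n-m)}$. The hypothesis $m\ge\sqrt n$ is what makes $n^{2d+4d/s}e^{-\eta m}\le n^{2d+4d/s}e^{-\eta\sqrt n}$ decay faster than any polynomial in $n^{-1}$, so this combinatorial factor too can be absorbed into $c_5n^{-2}\gamma_m\gamma_n$ via the lower bound $\gamma_m\gamma_n\ge n^{-4}$. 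Collecting constants yields $(1+c_5n^{-2})\gamma_m\gamma_n+c_5e^{-\eta(n-m)}\gamma_n$. The main obstacle is the tension between the three scales involved: the cell size $n^{-2-4/s}$ is dictated by the need to keep the enlargement error $|h|_q(n^{-4/s})^s=|h|_q n^{-4}$ negligible against $\gamma_n\ge n^{-2}$, but this forces $\#\mathcal B_n\asymp n^{2d+4d/s}$, and the resulting factor $n^{2d+4d/s}e^{-\eta m}$ is precisely what restricts the lemma to the regime $m\ge\sqrt n$; the complementary range $m<\sqrt n$ will require the separate treatment offered by Lemma \ref{l:measure of intersection}.
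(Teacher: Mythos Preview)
Your proposal is correct and follows essentially the same route as the paper's proof: localize via the partition $\mathcal B_n$, use Lemma~\ref{l:simple lemma s} on each cell to replace $\hat E_m\cap\hat E_n$ by an intersection of a ball and two preimages of hyperrectangles, apply Lemma~\ref{l:RcapF} twice (at lags $m$ and $n-m$), bound the enlarged rectangles by Lemma~\ref{l:measure of enlarged ball}, sum over the $n^{2d+4d/s}$ cells, and then absorb the error terms using $\gamma_m,\gamma_n\ge n^{-2}$ and $n^{2d+4d/s}e^{-\eta m}\ll n^{-2}$ from $m\ge\sqrt n$. Your closing paragraph on the tension between scales also captures exactly why the argument is confined to the regime $m\ge\sqrt n$.
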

\begin{proof}
	Since $ m<n $, we have
	\[n^{-2-4/s}m^2\le n^{-2-4/s}n^2= n^{-4/s}.\]
	For each ball $ B_{n,i}=B(\bx_i,n^{-2-4/s})\in\mathcal B_n $, by Lemma \ref{l:simple lemma s} we have
	\begin{align}
		&B_{n,i}\cap \hat E_m\cap \hat E_n\notag\\
		\subset& B_{n,i}\cap T^{-m}R(\bx_i,\xi_m(\bx_i)+2n^{-4/s}\br_m)\cap T^{-n}R(\bx_i,\xi_n(\bx_i)+2n^{-4/s}\br_n),\label{eq:BRR}
	\end{align}
	and so
	\begin{equation*}\label{eq:EcE}
		\begin{split}
			&\hat E_m\cap\hat E_n\\
			\subset& \bigcup_{B_{n,i}}B_{n,i}\cap T^{-m}R(\bx_i,\xi_m(\bx_i)+2n^{-4/s}\br_m)\cap T^{-n}R(\bx_i,\xi_n(\bx_i)+2n^{-4/s}\br_n).
		\end{split}
	\end{equation*}
	Applying Lemma \ref{l:RcapF} twice to the sets in \eqref{eq:BRR}, we have
	\begin{align}
		&\mu\big(B_{n,i}\cap T^{-m}R(\bx_i,\xi_m(\bx_i)+2n^{-4/s}\br_m)\cap T^{-n}R(\bx_i,\xi_n(\bx_i)+2n^{-4/s}\br_n)\big)\notag\\
		\le&\big(\mu(B_{n,i})+c_1e^{-\eta m}\big)\mu\big(R(\bx_i,\xi_m(\bx_i)+2n^{-4/s}\br_m)\cap T^{-(n-m)}R(\bx_i,\xi_n(\bx_i)+2n^{-4/s}\br_n)\big)\notag\\
		\le& \big(\mu(B_{n,i})+c_1e^{-\eta m}\big)\big(\mu(R(\bx_i,\xi_m(\bx_i)+2n^{-4/s}\br_m))+c_1e^{-\eta(n-m)}\big)\notag\\
		&\hspace{18em}\cdot \mu(R(\bx_i,\xi_n(\bx_i)+2n^{-4/s}\br_n)).\notag
	\end{align}
	By Lemma \ref{l:measure of enlarged ball}, the last quantity is majorized by
	\[ \big(\mu(B_{n,i})+c_1e^{-\eta m}\big)(\gamma_m+4d|h|_qn^{-4}+c_1e^{-\eta(n-m)})(\gamma_n+4d|h|_qn^{-4}).\]
	Summing over $ B_{n,i}\in\mathcal B_n $, we have
	\[\begin{split}
		&\mu(\hat E_m\cap \hat E_n)\\
		\le& \big(1+c_1n^{2d+4d/s}e^{-\eta m}\big)(\gamma_m+4d|h|_qn^{-4}+c_1e^{-\eta(n-m)})(\gamma_n+4d|h|_qn^{-4}).
	\end{split}\]
	From the conditions $ \gamma_m\ge m^{-2} $, $ \gamma_n\ge n^{-2} $ and $ \sqrt n\le m<n  $, it follows that
	\[4d|h|_qn^{-4}\le cn^{-2}\max (\gamma_m,\gamma_n)\qaq c_1n^{2d+4d/s}e^{-\eta m}\le c n^{-2}\]
	for some absolute constant $ c>0 $.
	Thus, we have
	\[\begin{split}
			&\mu(\hat E_m\cap \hat E_n)\\
			\le &\big(1+cn^{-2}\big)(\gamma_m(1+cn^{-2})+c_1e^{-\eta(n-m)})\cdot (1+cn^{-2})\gamma_n\\
			= &(1+cn^{-2})^3\gamma_m \gamma_n+(1+cn^{-2})^2c_1e^{-\eta (n-m)}\gamma_n.
		\end{split}\]
	By choosing a proper constant $c_5$, we arrive at the conclusion.
\end{proof}
The correlation estimates presented in Lemma \ref{l:estimations on correlations 1} work effectively for the case $ m\ge \sqrt{n} $, and the proof relies solely on Conditions I and II. We will employ Lemma \ref{l:measure of intersection} to estimate the correlations of $ \hat E_m\cap \hat{E}_n $ for $ m<\sqrt n $.

\begin{lem}\label{l:estimations on correlations 2}
	There exists a constant $ c_6 $ such that for any integers $ m $ and $ n $ with $ 0<m<\sqrt n $, we have
	\[\mu(\hat E_m\cap \hat E_n)\le(1+c_6m^{-2}) \gamma_m\gamma_n.\]
\end{lem}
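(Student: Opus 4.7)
The proof of Lemma~\ref{l:estimations on correlations 1} breaks down for $m<\sqrt{n}$: once the ball partition $\mathcal{B}_n$ has been used to localize $\hat E_n$, decorrelating the $m$-step factor by Lemma~\ref{l:RcapF} produces an error $c_1 n^{2d+4d/s}e^{-\eta m}$ which is no longer controlled by $m^{-2}$ when $m$ is much less than $\log n$. The remedy is to swap that decorrelation for Lemma~\ref{l:measure of intersection}(2), which collapses the cylinder partition $\mathcal{F}_m(n)$ at the $n$-scale cost $c_3 e^{-\eta n}$.

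Start from
\[ \mu(\hat E_m\cap\hat E_n)\ \le\ \mu(D(m,n))+\sum_{J_m\in\mathcal{F}_m(n)}\mu(J_m\cap\hat E_m\cap\hat E_n). \]
By Lemma~\ref{l:measure of intersection}(1), combined with $\gamma_m\gamma_n\ge m^{-2}n^{-2}$ and $m<\sqrt{n}$, the first term is $\le c_3 n^{-6}\ll m^{-2}\gamma_m\gamma_n$ and is absorbed. For each cylinder $J_m$, I would refine by the ball partition $\mathcal{B}_n$ and apply Lemma~\ref{l:simple lemma s} twice to obtain
\[ J_m\cap\hat E_m\cap\hat E_n\ \subset\ \bigcup_{B_{n,i}\in\mathcal{B}_n}J_m\cap B_{n,i}\cap T^{-m}R_m^{(i)}\cap T^{-n}R_n^{(i)}, \]
with $R_m^{(i)}=R(\bx_i,\xi_m(\bx_i)+2n^{-2-4/s}m^2\br_m)$ and $R_n^{(i)}=R(\bx_i,\xi_n(\bx_i)+2n^{-4/s}\br_n)$. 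The hypothesis $m<\sqrt{n}$ forces $n^{-2-4/s}m^2\le n^{-1-4/s}$, and Lemma~\ref{l:measure of enlarged ball} together with $\gamma_m\ge m^{-2}$ then gives $\mu(R_m^{(i)})\le(1+cm^{-2})\gamma_m$ and $\mu(R_n^{(i)})\le(1+cn^{-2})\gamma_n$.

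Next, apply Lemma~\ref{l:measure of intersection}(2) with $R_1=B_{n,i}$, $R_2=R_m^{(i)}$, $R_3=R_n^{(i)}$ to dispose of the sum over $J_m$, producing $(\mu(B_{n,i}\cap T^{-m}R_m^{(i)})+c_3 e^{-\eta n})\mu(R_n^{(i)})$ for each ball. Summing over $B_{n,i}$, the residual $\#\mathcal{B}_n\cdot c_3 e^{-\eta n}(1+cn^{-2})\gamma_n=n^{2d+4d/s}c_3 e^{-\eta n}(1+cn^{-2})\gamma_n$ decays faster than any polynomial and is absorbed. For the dominant piece, disjointness of the $B_{n,i}$ yields
\[ \sum_{B_{n,i}}\mu(B_{n,i}\cap T^{-m}R_m^{(i)})=\mu\bigg(\bigsqcup_{B_{n,i}}B_{n,i}\cap T^{-m}R_m^{(i)}\bigg)\le\mu(\hat E_m^+), \]
where $\hat E_m^+:=\{\bx:T^m\bx\in R(\bx,\xi_m(\bx)+\bm\delta)\}$ is a slight enlargement of $\hat E_m$ with $|\bm\delta|=O(n^{-1-4/s})$; the inclusion uses that $\bx\in B_{n,i}$ lies within $n^{-2-4/s}$ of $\bx_i$ and that $\xi_m$ varies over $B_{n,i}$ by at most $2n^{-2-4/s}m^2\br_m$, which one reads off from the set inclusions in Lemma~\ref{l:simple lemma s}. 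To bound $\mu(\hat E_m^+)$ I would rerun the proof of Lemma~\ref{l:measure of hat En} on $\hat E_m^+$: ball partition $\mathcal{B}_m$ of radius $m^{-2-4/s}$, Lemma~\ref{l:RcapF}, and Lemma~\ref{l:measure of enlarged ball}, obtaining $\mu(\hat E_m^+)\le(1+c_1 m^{2d+4d/s}e^{-\eta m})(1+cm^{-2})\gamma_m$.

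\textbf{The main obstacle} is the analogous polynomial-versus-exponential mismatch at the $m$-scale: $m^{2d+4d/s}e^{-\eta m}$ is dominated by $m^{-2}$ only once $m$ exceeds a universal threshold $m_0=m_0(d,s,\eta)$. For $m\ge m_0$ the previous display gives $\mu(\hat E_m^+)\le(1+cm^{-2})\gamma_m$ and hence $\mu(\hat E_m\cap\hat E_n)\le(1+c_6 m^{-2})\gamma_m\gamma_n$. For the finitely many values $m<m_0$ the same display only yields $\mu(\hat E_m^+)\le C\gamma_m$ for a constant $C=C(d,s,\eta)$, so $\mu(\hat E_m\cap\hat E_n)\le C(1+cn^{-2})\gamma_m\gamma_n$; since $\gamma_m\ge m^{-2}\ge m_0^{-2}$ in this range, enlarging $c_6$ so that $1+c_6 m^{-2}\ge 2C$ for all $m<m_0$ (which forces at most $c_6\ge(2C-1)m_0^2$) absorbs this case as well and completes the lemma.
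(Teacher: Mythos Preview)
Your argument is correct, and the overall architecture---split off $D(m,n)$, localize via $\mathcal B_n$, then collapse the cylinder sum with Lemma~\ref{l:measure of intersection}(2)---matches the paper. The one place where you diverge is in handling the sum $\sum_{B_{n,i}}\mu(B_{n,i}\cap T^{-m}R_m^{(i)})$: you absorb it into an enlarged recurrence set $\hat E_m^+$ and then rerun the $\mathcal B_m$-argument of Lemma~\ref{l:measure of hat En} to bound $\mu(\hat E_m^+)$. The paper instead introduces the two-scale nested partition $\mathcal B_n\subset\mathcal B_m$ from the outset and applies Lemma~\ref{l:simple lemma s} for $\hat E_m$ at the \emph{coarse} scale, so that the $m$-rectangle $R(\by_j,\xi_m(\by_j)+2m^{-4/s}\br_m)$ depends only on the $\mathcal B_m$-index $j$; then $\sum_{B_{n,i}\subset B_{m,j}}\mu(B_{n,i}\cap T^{-m}R_2)=\mu(B_{m,j}\cap T^{-m}R_2)$ by disjointness, and a single application of Lemma~\ref{l:RcapF} at scale $m$ finishes. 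This avoids defining $\hat E_m^+$ and re-deriving its localization, at the cost of the (minor) nesting trick. Both routes end with the same factor $(1+c_1m^{2d+4d/s}e^{-\eta m})(\gamma_m+O(m^{-4}))$.

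One remark on your ``main obstacle'': it is not an obstacle at all. The function $m\mapsto m^{2d+4d/s+2}e^{-\eta m}$ is bounded on $m\ge 1$, so there is a constant $c=c(d,s,\eta)$ with $m^{2d+4d/s}e^{-\eta m}\le c\,m^{-2}$ for \emph{every} $m\ge 1$; the paper uses exactly this (writing $c_1m^{2d+4d/s}e^{-\eta m}\le cm^{-2}\gamma_m$) without any threshold $m_0$. Your case split is harmless but unnecessary.
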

\begin{proof}
	Recall the definitions of $D(m,n)$ and $\mathcal F_m(n)$ in Lemma \ref{l:measure of intersection}. Since $D(m,n)\cup\mathcal \bigcup_{J_m\in\mathcal F_m(n)}J_m=[0,1]^d$, by Lemma \ref{l:measure of intersection}
	\begin{align}
		\mu(\hat E_m\cap \hat E_n)&\le \mu(D(m,n)\cap\hat E_m\cap \hat E_n)+\sum_{J_m\in\mathcal F_m(n)}\mu(J_m\cap \hat E_m\cap \hat E_n)\notag\\
		&\le c_3n^{-6}+\sum_{J_m\in\mathcal F_m(n)}\mu(J_m\cap \hat E_m\cap \hat E_n).\notag\\
		&\le c_3n^{-2}\gamma_m\gamma_n+\sum_{J_m\in\mathcal F_m(n)}\mu(J_m\cap \hat E_m\cap \hat E_n).\label{eq:twosum}
	\end{align}

	Now, we estimate the summation in \eqref{eq:twosum}. By decreasing the radius of balls in $ \mathcal B_n $ by a factor between $ 1/2 $ and $ 1 $, we can assume that each ball in $ \mathcal B_n $ is contained in a unique ball in $ \mathcal B_m $. This manipulation does affect the later conclusion. By Lemma \ref{l:simple lemma s} we have
	\[\begin{split}
		&B_{m,j}\cap\hat E_m\cap\hat E_n=\bigcup_{B_{n,i}:B_{n,i}\subset B_{m,j}}(B_{m,j}\cap\hat E_m)\cap (B_{n,i}\cap\hat E_n)\\
		\subset& \bigcup_{B_{n,i}:B_{n,i}\subset B_{m,j}}  B_{n,i}\cap T^{-m}R(\by_j,\xi_m(\by_j)+2m^{-4/s}\br_m)\cap  T^{-n}R(\bx_i,\xi_n(\bx_i)+2n^{-4/s}\br_n),
	\end{split}\]
	where $ B_{m,j}=B(\by_j,m^{-2-4/s})\in\mathcal B_m $ and $ B_{n,i}=B(\bx_i,n^{-2-4/s})\in\mathcal B_n $. Hence,
	\begin{equation}\label{eq:B(x,r) cap Em cap En s}
		\begin{split}
			&\bigcup_{J_m\in \mathcal F_m(n)}J_m\cap\hat E_m\cap\hat E_n\\
			&\subset \bigcup_{B_{m,j}\in\mathcal B_m}\bigcup_{B_{n,i}:B_{n,i}\subset B_{m,j}}\bigcup_{J_m\in \mathcal F_m(n)}J_m\cap G_{i,j}(m)\cap T^{-n}R(\bx_i,\xi_n(\bx_i)+2n^{-4/s}\br_n),
		\end{split}
	\end{equation}
	where
	\begin{equation}\label{eq:Gim}
		G_{i,j}(m)=B_{n,i}\cap T^{-m}R(\by_j,\xi_m(\by_j)+2m^{-4/s}\br_m).
	\end{equation}
	By \eqref{eq:B(x,r) cap Em cap En s} and Lemmas \ref{l:measure of intersection} and \ref{l:measure of enlarged ball} we have
	\begin{align}
		&\sum_{J_m\in\mathcal F_m(n)}\mu(J_m\cap \hat E_m\cap\hat E_n)\notag\\
		\le &\sum_{B_{m,j}\in\mathcal B_m}\sum_{B_{n,i}\subset B_{m,j}}\sum_{J_m\in\mathcal F_m(n)}\mu\big(J_m\cap G_{i,j}(m)\cap T^{-n}R(\bx_i,\xi_n(\bx_i)+2n^{-4/s}\br_n)\big)\notag\\
		\le&\sum_{B_{m,j}\in\mathcal B_m}\sum_{B_{n,i}\subset B_{m,j}} \big(\mu(G_{i,j}(m))+c_5e^{-\eta   n}\big)\mu\big(R(\bx_i,\xi_n(\bx_i)+2n^{-4/s}\br_n)\big)\notag\\
		\le&\sum_{B_{m,j}\in\mathcal B_m}\sum_{B_{n,i}\subset B_{m,j}} \big(\mu(G_{i,j}(m))+c_5e^{-\eta n}\big)(\gamma_n+4d|h|_qn^{-4})\label{eq:summation need to estimate}.
	\end{align}
	Using \eqref{eq:Gim} and Lemma \ref{l:RcapF}, we deduce that
	\begin{align}
		&\sum_{B_{m,j}\in\mathcal B_m}\sum_{B_{n,i}\subset B_{m,j}} \big(\mu(G_{i,j}(m))+c_5e^{-\eta n}\big)\notag\\
		= & \sum_{B_{m,j}\in\mathcal B_m}\mu\big(B_{m,j}\cap T^{-m}R(\by_j,\xi_m(\by_j)+2m^{-4/s}\br_m)\big)+c_5n^{2d+4d/s}e^{-\eta n}\notag\\
		\le &\sum_{B_{m,j}\in\mathcal B_m}\big(\mu(B_{m,j})+c_1e^{-\eta m}\big)\mu\big(R(\by_j,\xi_m(\by_j)+2m^{-4/s}\br_m)\big)+c_5n^{2d+4d/s}e^{-\eta n}\notag\\
		\le &(1+c_1m^{2d+4d/s}e^{-\eta m})(\gamma_m+4d|h|_qm^{-4})+c_5n^{2d+4d/s}e^{-\eta n}.\label{eq:split}
	\end{align}
	Since
	\[c_5n^{2d+4d/s}e^{-\eta n}\ll c_1m^{2d+4d/s}e^{-\eta m}\le cm^{-2}\gamma_m,\]
	and
	\[4d|h|_qm^{-4}\le c m^{-2}\gamma_m\quad \text{and}\quad 4d|h|_qn^{-4}\le c n^{-2}\gamma_n\]
	for some constant $ c>0 $, by \eqref{eq:summation need to estimate} and \eqref{eq:split}, we have
	\[\begin{split}
		\sum_{J_m\in\mathcal F_m(n)}\mu(J_m\cap \hat E_m\cap\hat E_n)\le &\big( (1+cm^{-2})^2\gamma_m+cm^{-2}\gamma_m\big)(1+cn^{-2})\gamma_n.
	\end{split}\]
	Therefore, there exists a constant $ c_6 $ such that
	\[\mu(\hat E_m\cap \hat E_n)\le(1+c_6m^{-2}) \gamma_m\gamma_n.\qedhere\]
\end{proof}
\subsection{Completing the proof of Theorem \ref{t:main}}
We first show the strong dynamical Borel-Cantelli lemma for the auxilliary sets $ \hat E_n $ and then using Zygmund differentiation theorem to complete the proof of the divergence part of Theorem \ref{t:main}.
\begin{lem}\label{l:sbchat}
	For $ \mu $-almost every $ \bx $, we have
	\begin{equation}\label{eq:sbchat}
		\lim_{n\to\infty}\frac{\sum_{k=1}^n\mathbbm{1}_{R(\bx, \xi_k(\bx))}(T^k\bx)}{\sum_{k=1}^n\mu\big(R(\bx,\xi_k(\bx))\big)}=1.
	\end{equation}
\end{lem}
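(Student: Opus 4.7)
The plan is to apply the quantitative divergence Borel–Cantelli lemma (Theorem \ref{t:sbc}) to the sequence $A_n = \hat E_n$ in the probability space $([0,1]^d,\mu)$. The reformulation is immediate: $\mathbbm{1}_{R(\bx,\xi_k(\bx))}(T^k\bx) = \mathbbm{1}_{\hat E_k}(\bx)$, and by the very definition of $l_k(\bx)$ we have $\mu(R(\bx,\xi_k(\bx))) = \gamma_k$. So the left–hand side of \eqref{eq:sbchat} is precisely $A(n,\bx)/\sum_{k=1}^{n}\gamma_k$ in the notation of Theorem \ref{t:sbc}. Note also that under our standing assumption $|\br_n|\to 0$, the numbers $\gamma_n$ are eventually bounded (in fact $\gamma_n \to 0$).

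First I would upgrade Lemma \ref{l:measure of hat En} to a matching upper bound: running the argument of that lemma with the ``$\subset$" direction of Lemma \ref{l:simple lemma s}, applied to each ball $B_{n,i}\in\mathcal B_n$, and estimating with Lemmas \ref{l:RcapF} and \ref{l:measure of enlarged ball}, gives $\mu(\hat E_n)\le (1+Cn^{-2})\gamma_n$. Together with Lemma \ref{l:measure of hat En} this yields $\mu(\hat E_n)=\gamma_n(1+O(n^{-2}))$, so the difference $\phi(n) - \sum_{k=1}^{n}\gamma_k$ is bounded by a convergent series; since $\sum_{k=1}^n\gamma_k\to\infty$ we obtain $\phi(n)/\sum_{k=1}^{n}\gamma_k\to 1$. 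The main task is then to verify the Sprindžuk hypothesis
\[\sum_{M\le m<n\le N}\bigl(\mu(\hat E_m\cap \hat E_n)-\mu(\hat E_m)\mu(\hat E_n)\bigr)\le c\sum_{n=M}^{N}\mu(\hat E_n),\]
after which Theorem \ref{t:sbc} gives $A(n,\bx)/\phi(n)\to 1$ almost surely and, dividing by the previous ratio, \eqref{eq:sbchat} follows.

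To verify the hypothesis I would split the double sum at $m=\sqrt{n}$. Combining the lower bound $\mu(\hat E_m)\mu(\hat E_n)\ge (1-c_4m^{-2})(1-c_4n^{-2})\gamma_m\gamma_n$ with Lemmas \ref{l:estimations on correlations 1} and \ref{l:estimations on correlations 2} converts the multiplicative errors into additive ones: for $m<\sqrt n$ the integrand is at most $C(m^{-2}+n^{-2})\gamma_m\gamma_n$, while for $\sqrt n\le m<n$ it is at most $C(m^{-2}+n^{-2})\gamma_m\gamma_n+c_5 e^{-\eta(n-m)}\gamma_n$. The delicate step — and where I expect the main book-keeping obstacle — is to show that each of these double sums is absorbed into a constant multiple of $\sum_{n}\gamma_n$. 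For the $m<\sqrt n$ regime one uses $\sum_m m^{-2}<\infty$ and the bound $\sum_{m<\sqrt n}\gamma_m\le C\sqrt n$ together with the $n^{-2}$ factor; for the $m\ge \sqrt n$ regime one exploits $m^{-2}\le n^{-1}$ so that the inner sum over the $\le n$ many values of $m$ contributes at most a constant, and the exponential tail is summable by geometric series. The boundedness of $\gamma_n$ (secured by $|\br_n|\to 0$) is used in both cases to pull $\gamma_m$ out of the inner sums. Once the hypothesis is established, Theorem \ref{t:sbc} finishes the proof.
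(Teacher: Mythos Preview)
Your proposal is correct and follows essentially the same route as the paper: apply Theorem~\ref{t:sbc} to $A_n=\hat E_n$, split the correlation sum at $m=\sqrt n$, and feed in Lemmas~\ref{l:measure of hat En}, \ref{l:estimations on correlations 1}, \ref{l:estimations on correlations 2} to reduce everything to $\ll\sum_{n=M}^N\gamma_n$. Your extra step of upgrading Lemma~\ref{l:measure of hat En} to a two-sided estimate $\mu(\hat E_n)=\gamma_n(1+O(n^{-2}))$ makes explicit a detail the paper glosses over (it passes from $\phi(N)$ to $\sum_k\gamma_k$ via an ``$\asymp$'' without justifying the upper half).
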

\begin{proof}
	Let $ 1\le M<N $, we have
	\begin{align}
		&\sum_{M\le m<n\le N}\big(\mu(\hat E_m\cap \hat E_n)-\mu(\hat E_n)\mu(\hat E_m)\big)\notag\\
		=&\bigg(\sum_{M\le m<n\le N\atop m\ge \sqrt{n}}+\sum_{M\le m<n\le N\atop m<\sqrt{n}}\bigg)\big(\mu(\hat E_m\cap \hat E_n)-\mu(\hat E_n)\mu(\hat E_m)\big)\label{eq:corsum}
	\end{align}
By Lemmas \ref{l:measure of hat En} and \ref{l:estimations on correlations 1}, the first summation can be estimated as
\begin{align*}
	\le&\sum_{M\le m<n\le N\atop m\ge \sqrt{n}} \big((1+c_5n^{-2})\gamma_m\gamma_n+c_5e^{-\eta(n-m)}\gamma_n-(1-c_4m^{-2})(1-c_4n^{-2})\gamma_m\gamma_n\big)\\
	\ll &\sum_{M\le m<n\le N\atop m\ge \sqrt{n}} \big(n^{-2}\gamma_n+e^{-\eta(n-m)}\gamma_n+m^{-2}\gamma_n\big)\ll\sum_{n=M}^{N}\gamma_n\asymp\sum_{n=M}^{N}\mu(\hat E_n).
\end{align*}
For the second summation in \eqref{eq:corsum}, we have
\begin{align*}
	\le&\sum_{M\le m<n\le N\atop m< \sqrt{n}} \big((1+c_6 m^{-2}) \gamma_m\gamma_n-(1-c_4m^{-2})(1-c_4n^{-2})\gamma_m\gamma_n\big)\\
	\ll &\sum_{M\le m<n\le N\atop m< \sqrt{n}} m^{-2}\gamma_n\ll\sum_{n=M}^{N}\gamma_n\asymp\sum_{n=M}^{N}\mu(\hat E_n).
\end{align*}
Now, apply Theorem \ref{t:sbc}, we finish the proof.
\end{proof}

\begin{thm}[Zygmund differentiation theorem {\cite[Theorem 2.29]{Hajlasz}}]\label{t:differentiation theorem}
	Let $ \{\br_n\} $ be a sequence of positive vectors with $ \lim_{n\to\infty}|\br_n|\to 0 $. If $ f\in L^q(\lm^d) $ for some $ q>1 $, then
	\[\lim_{n\to\infty}\frac{\int_{R(\bx,\br_n)} f(\by)\dif\lm^d(\by)}{\lm^d\big(R(\bx,\br_n)\big)}=f(\bx)\qquad\text{for }\lm^d\text{-a.e.\,}\bx.\]
\end{thm}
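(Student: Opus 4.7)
The plan is the classical maximal function approach. Introduce the strong maximal operator
\[M_s f(\bx) := \sup_{R \ni \bx} \frac{1}{\lm^d(R)} \int_R |f(\by)| \dif\lm^d(\by),\]
where the supremum runs over all axis-parallel closed rectangles $R \subset [0,1]^d$ containing $\bx$, and write
\[A_n f(\bx) := \frac{1}{\lm^d(R(\bx,\br_n))} \int_{R(\bx,\br_n)} f(\by)\dif\lm^d(\by),\]
so that $|A_n f(\bx)| \le M_s f(\bx)$ for every $n$ and $\bx$. The goal is to show $A_n f \to f$ almost everywhere.

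The first step is to establish the strong $(q,q)$ inequality $\|M_s f\|_q \le C_q^d \|f\|_q$ (the Jessen--Marcinkiewicz--Zygmund theorem). By Fubini, averaging over a rectangle $R = I_1 \times \cdots \times I_d$ factorizes through one-dimensional averages, giving the pointwise bound $M_s f(\bx) \le M_1 M_2 \cdots M_d |f|(\bx)$, where $M_i$ denotes the Hardy--Littlewood maximal operator acting on the $i$-th coordinate with the other coordinates frozen. Each $M_i$ is of weak type $(1,1)$ and of strong type $(\infty,\infty)$, hence of strong type $(q,q)$ for $q>1$ by Marcinkiewicz interpolation; iterating gives the claimed bound.

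The second step is the standard density argument. For continuous $g \in C([0,1]^d)$, uniform continuity together with $|\br_n|\to 0$ yields $\|A_n g - g\|_\infty \to 0$. Given $f \in L^q(\lm^d)$ and $\varepsilon > 0$, pick $g \in C([0,1]^d)$ with $\|f-g\|_q < \varepsilon$. For every $\bx$,
\[\limsup_{n\to\infty}|A_n f(\bx) - f(\bx)| \le |(f-g)(\bx)| + M_s(f-g)(\bx).\]
Chebyshev's inequality combined with the $L^q$-bound gives
\[\lm^d\bigl(\{|f-g| > \sqrt{\varepsilon}\}\bigr) + \lm^d\bigl(\{M_s(f-g) > \sqrt{\varepsilon}\}\bigr) \le (1 + C_q^{dq})\,\varepsilon^{q/2}.\]
Letting $\varepsilon$ run through a summable sequence $\varepsilon_k \to 0$ and applying Borel--Cantelli, the set on which $\limsup_{n}|A_n f - f|$ exceeds $2\sqrt{\varepsilon_k}$ for infinitely many $k$ is $\lm^d$-null, so $A_n f(\bx) \to f(\bx)$ for $\lm^d$-a.e.\,$\bx$.

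The main obstacle is the $L^q$-boundedness of $M_s$. Because axis-parallel rectangles can be arbitrarily eccentric, no Vitali/Besicovitch covering lemma is available for them, and the classical proof of the Hardy--Littlewood maximal theorem cannot be transplanted. One is forced to route through the one-dimensional theory coordinate-wise via Fubini, which is exactly what breaks at the endpoint: the strong maximal function is \emph{not} of weak type $(1,1)$, and Saks' classical construction produces an $L^1$ function whose averages over a shrinking family of rectangles diverge on a set of positive measure. Consequently the hypothesis $q>1$ is essential and cannot be relaxed to $q=1$ without imposing stronger integrability such as $f\log^{d-1}(2+|f|) \in L^1$.
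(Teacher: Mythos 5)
The paper does not prove this statement---it is quoted verbatim from the cited reference (Haj\l asz, Theorem 2.29)---so there is no internal proof to compare against; your argument is the standard one that the reference itself uses, namely $L^q$-boundedness of the strong maximal operator via the Jessen--Marcinkiewicz--Zygmund iteration of one-dimensional Hardy--Littlewood maximal operators, followed by the usual density argument with continuous functions, and it is correct as written (the only cosmetic point is that for $\bx$ near $\partial[0,1]^d$ the rectangle $R(\bx,\br_n)$ may protrude outside the cube, but since $|\br_n|\to 0$ this affects only the Lebesgue-null boundary). Your closing remark that $q>1$ cannot be weakened to $q=1$ without an $L\log^{d-1}L$-type hypothesis is also accurate and explains why the paper's Condition II demands $h\in L^q(\lm^d)$ with $q>1$.
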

Now, we employ Zygmund differentiation theorem to complete the proof.
\begin{proof}[Proof of Theorem \ref{t:main}: divergence part]
	Let $ \{t_i\} $ be an enumeration of positive rational numbers. For each $ i $, define a new sequence $ \{\br^{(i)}_n\}_{n\ge 1} $ with $ \br^{(i)}_n=t_i\br_n $. Let $ \xi_n^{(i)}(\bx) $ be defined in the same way as $ \xi_n(\bx) $, that is, $ \xi_n^{(i)}(\bx)=l_n^{(i)}(\bx)\br_n $ is chosen to satisfy $ \mu(R(\bx,\xi_n^{(i)}(\bx)))=t_i^d\gamma_n $, where $l_n^{(i)}(\bx)\ge 0$. Since each $ t_i $ is positive, for any $i$ and the sequence $\{\br_n^{(i)}\}_{n\ge 1}$, we can  deduce from Lemma \ref{l:sbchat} that \eqref{eq:sbchat} holds for $\mu$-almost every $ \bx $ with $ \xi_k(\bx) $ replaced by $ \xi_k^{(i)}(\bx) $. Let $ F $ be the intersection of the set of points satisfying \eqref{eq:sbchat} with $ \xi_k(\bx) $ replaced by $ \xi_k^{(i)}(\bx) $ (for all $i$) and the set of points satisfying Theorem \ref{t:differentiation theorem}. Since $\{t_i\}$ is countable, $ F $ is a countable intersection of sets with full $\mu$-measure, and so $F$ itself is also of full $\mu$-measure.

	Let $ \bx\in F $. For any $ \varepsilon>0 $, by the density of rationals, we can choose $ t_i $ and $t_j $ such that
	\begin{equation}\label{eq:tij}
		t_i^d< 2^dh(\bx)< t_i^d(1+\varepsilon) \qquad\text{and}\qquad t_j^d(1-\varepsilon)< 2^dh(\bx)< t_j^d.
	\end{equation}
	By the definition of $ F $, it follows from Theorem \ref{t:differentiation theorem} that
	\[\lim_{n\to\infty}\frac{\mu\big(R(\bx,\br_n)\big)}{2^d\gamma_n}=h(\bx)\quad\text{or equivalently}\quad \lim_{n\to\infty}\frac{\mu\big(R(\bx,\br_n)\big)}{\gamma_n}=2^dh(\bx).\]
	Since $t_i^d<2^dh(\bx)$ (see \eqref{eq:tij}) and $ \mu(R(\bx,\xi_n^{(i)}(\bx)))=\mu(R(\bx,l_n^{(i)}(\bx)\br_n))=t_i^d\gamma_n $, we have $ l_n^{(i)}(\bx)\le 1 $ for all large $ n $. Similarly, we have $ l_n^{(j)}(\bx)\ge 1 $ for all large $ n $. Therefore, for all large $n$,
	\[R(\bx,\xi_n^{(i)}(\bx))\subset R(\bx,\br_n)\subset R(\bx,\xi_n^{(j)}(\bx)).\]
	This together with Lemma \ref{l:sbchat} and \eqref{eq:tij} gives
	\begin{equation*}
		\begin{split}
			\frac{1}{1+\varepsilon}=\lim_{n\to\infty}\frac{\sum_{k=1}^n\mathbbm{1}_{R(\bx, \xi_k^{(i)}(\bx))}(T^k\bx)}{(1+\varepsilon)\sum_{k=1}^n\mu\big(R(\bx,\xi_k^{(i)}(\bx))\big)}&=\lim_{n\to\infty}\frac{\sum_{k=1}^n\mathbbm{1}_{R(\bx, \xi_k^{(i)}(\bx))}(T^k\bx)}{\sum_{k=1}^n(1+\varepsilon)t_i^d\gamma_k}\\
			&\le \lim_{n\to\infty}\frac{\sum_{k=1}^n\mathbbm{1}_{R(\bx, \br_k)}(T^k\bx)}{\sum_{k=1}^n2^dh(\bx)\gamma_k}.
		\end{split}
	\end{equation*}
	Similarly, we have
	\begin{equation*}
		\begin{split}
			\lim_{n\to\infty}\frac{\sum_{k=1}^n\mathbbm{1}_{R(\bx, \br_k)}(T^k\bx)}{\sum_{k=1}^n2^dh(\bx)\gamma_k}\le\lim_{n\to\infty}\frac{\sum_{k=1}^n\mathbbm{1}_{R(\bx, \xi_k^{(j)}(\bx))}(T^k\bx)}{\sum_{k=1}^n(1-\varepsilon)t_j^d\gamma_k}= \frac{1}{1-\varepsilon}.
		\end{split}
	\end{equation*}
	By the arbitrariness of $ \varepsilon $, we finish the proof.
\end{proof}

\end{document}